\definecolor{Green}{rgb}{0,0.5,0}
\numberwithin{equation}{section}
\theoremstyle{plain}
\newtheorem{theorem}{Theorem}[subsection]
\newtheorem{lemma}[theorem]{Lemma}
\newtheorem{proposition}[theorem]{Proposition}
\newtheorem{cor}[theorem]{Corollary}
\newtheorem*{mainthm} {Main Theorem}
\theoremstyle{definition}
\newtheorem{definition}[theorem]{Definition}
\newtheorem*{acknowledgements}{Acknowledgements}
\theoremstyle{remark}
\newtheorem{remark}[theorem]{Remark}
\newtheorem{example}[theorem]{Example}
\newtheorem*{mainex}{Main Example}
\newcommand{\cj}{\mathbb{C}}
\newcommand{\rj}{\mathbb{R}}
\newcommand{\nj}{\mathbb{N}}
\newcommand{\R}{{\mathcal R}}
\newcommand{\K}{{\mathcal K}}
\newcommand{\D}{\overline{\mathbb{D}}}
\begin{document}

\title[Non-autonomous iteration in $\mathbb{C}$]{ Non-autonomous iteration of  polynomials in the complex plane}

\author[M. Kosek and M. Stawiska]{ Marta Kosek\\ Institute of Mathematics\\ Faculty of Mathematics and Computer Science\\ Jagiellonian University\\ \L ojasiewicza 6, 30-348 Krak\'ow, POLAND\\
E-mail:
 Marta.Kosek@im.uj.edu.pl\\
\and
Ma\l gorzata Stawiska\\
American Mathematical Society--Mathematical Reviews\\416 Fourth St., Ann Arbor, Michigan, USA\\
E-mail: stawiska@umich.edu
}

\date{{\color{Green}\today}
}

\maketitle

\renewcommand{\thefootnote}{}

\footnote{2020 \emph{Mathematics Subject Classification}:  Primary 37F10; Secondary 30C10, 30E10,  31A15}

\footnote{\emph{Key words and phrases}: Julia sets, polynomials,  Green function, Chebyshev polynomials.}

\renewcommand{\thefootnote}{\arabic{footnote}}
\setcounter{footnote}{0}


\begin{abstract}
 We consider a  sequence $(p_n)_{n=1}^\infty$ of polynomials with uniformly bounded zeros and  $\deg p_1\geq 1$, $\deg p_n\geq 2$ for $n\geq 2$, satisfying  certain asymptotic conditions. We prove that  the function sequence  $\left(\frac{1}{\deg p_n\cdot...\cdot \deg p_1}\log^+|p_n\circ...\circ p_1|\right)_{n=1}^\infty$ is uniformly convergent in $\cj$. The   non-autonomous filled Julia set  $\mathcal{K}[(p_{n})_{n=1}^\infty]$ generated by the  polynomial
 sequence $(p_{n})_{n=1}^\infty$ is defined and  shown  to be compact
 and regular with respect to the Green function.  Our toy example is generated by $t_n=\frac{1}{2^{n-1}}T_n,\ n\in\{1,2,...\}$, where $T_n$ is the classical Chebyshev polynomial of degree $n$.
\end{abstract}

\maketitle


\section{Introduction}

Behavior of iterates  $p^n:=p^{\circ n}$ of a  polynomial $p: \mathbb{C} \longrightarrow \mathbb{C}$ has been intensely studied since the nineteenth century.  For an   introduction to the subject see e.g. \cite{CGbook}.  Much more recent (and much less advanced) is the study of sequences of maps of the type $p_n\circ...\circ p_1$ where each $p_n$, $n\in\{1,2,...\},$ is a polynomial, but not necessarily the same one. Under the name of "generalized iteration", the works  \cite{BB03} and \cite{B97} deal with such sequences when the coefficients of underlying polynomials   satisfy some conditions (see Subsection \ref{subsection: KW} for more details).   For a  recent  study of iteration of quadratic polynomials with random coefficients see  \cite{LZ} and \cite{LZ24}.

In this article we extend the dynamical study to sequences of polynomials   without necessarily assuming anything about their  coefficients. Instead, we work with sequences of polynomials exhibiting certain asymptotic behavior, called guided sequences.
 Our class includes not only   the family $\mathcal{B}$ studied in \cite{BB03} and \cite{B97}, but also  important  sequences
 of polynomials associated
with a (fixed) compact polynomially convex set in the complex
plane,   namely sequences satisfying the Kalm\'ar-Walsh
condition. Such sequences  have been widely studied in complex approximation and interpolation
theory, often in connection with logarithmic potential theory
(\cite{Walsh}, \cite{BBCL}, \cite{CSZ4}, \cite{BE}).    For examples
see Subsection~~\ref{subsection: KW}.

\begin{mainex}
Recall that   the classical Chebyshev polynomials  satisfy
$T_{d_1}\circ T_{d_2} =T_{d_2}\circ T_{d_1}=T_{d_1d_2}$. All zeros of $T_d$ belong to the segment $[-1,1]$.
The polynomial $t_d=\frac{1}{2^{d-1}}T_d$ is the  monic polynomial with minimal uniform norm (the minimal
polynomial) of degree $d$ on $[-1,1]$,~~so
$$\lim_{d \to \infty}\|t_d\|_{[-1,1]}^{1/d}= {\rm cap}([-1,1]) =\frac12 
$$ and
$$\lim_{d \to \infty}\frac{1}{d}\log|t_d(z)|=g_{[-1,1]}(z) -\log 2$$
locally uniformly in $\mathbb{C}\setminus [-1,1]$.  Here $g_{[-1,1]}$ is the complex Green function of the segment $[-1,1]$  and ${\rm cap}([-1,1])$ is its logarithmic capacity (for the definitions and properties of these notions, see Section \ref{s: GF}). In consequence,
$\lim_{d \to \infty}\frac{1}{d}\log|T_d(z)|=g_{[-1,1]}(z) $ locally uniformly in $\mathbb{C}\setminus [-1,1]$.

For a sequence of  integers   $(d_n)_{n=1}^\infty$ not smaller than 2  we  get    $ T_{d_n} \circ ...\circ T_{d_1}=T_{d_n...d_1}$, hence $(T_{d_ n} \circ ...\circ T_{d_1})_{n=1}^\infty$ is a subsequence of the sequence of all  classical Chebyshev polynomials
with increasing degrees and
\[
\lim_{n \to \infty}\frac{1}{d_n...d_1}\log \left|(T_{d_n}\circ...\circ T_{d_1})
(z)\right|=g_{[-1,1]}(z)
\]
locally uniformly in $\mathbb{C}\setminus [-1,1]$.

  By a more dynamical approach
  it was shown in \cite{Ko21}  that
\[
\lim_{n \to \infty}\frac{1}{d_n...d_1}\log^+ \left|(T_{d_n}\circ...\circ T_{d_1})(z)\right|=g_{[-1,1]}(z)
\]
 and this convergence is uniform in the whole complex plane.
\end{mainex}

The function $\frac{1}{d_n...d_1}\log^+ \left|(T_{d_n}\circ...\circ T_{d_1})\right|$ is the Green function of the preimage of the  closed  unit disk, $g_{(T_{d_n}\circ...\circ T_{d_1})^{-1}\left(
\D(0,1)\right)}$   (cf.  (\ref{e:Greenaprzeciwobrazu}) and Example \ref{ex: koloGreen}), and the convergence of  functions can be thought of as the convergence in Klimek's metric of the sets $(T_{d_n}\circ...\circ T_{d_1})^{-1}\left(
\D(0,1)\right)$ to $[-1,1]$. It is natural to ask whether the convergence still holds if
  $(T_n)_{n=1}^\infty$ is replaced by another sequence $(p_n)_{n=1}^\infty$ of polynomials (not necessarily associated with a specific compact set $K$) and $
\D(0,1)$ is replaced by an arbitrary  compact, polynomially convex regular set $E$. We show that this is indeed the case when the polynomials $p_n$ satisfy certain asymptotic condition and their zeros are uniformly bounded. Our main result is the following (see Theorem \ref{thm:zbiornieautonomiczny}):

\begin{mainthm}\label{thm: firstmain}
Consider a sequence of polynomials $(p_n)_{n=1}^\infty$   with  $\deg p_1\geq 1$ and $\deg p_n\geq 2$ for $ n \geq 2$. Define $\alpha:=\limsup_{n \to \infty}\frac{1}{{\rm deg} \ p_n}$. Assume that  there exist an $r>0$ and   a function $h:\cj\longrightarrow \rj$ satisfying $h(z)-\alpha\log|z|\longrightarrow \infty \text{ as } |z|\to \infty$ such that   \begin{enumerate}
\item[$(i)$] $\displaystyle
\bigcup_{n=1}^\infty p_n^{-1}(0) \subset
 \D(0,r)$;
 \item[$(ii)$] $\displaystyle \exists N >1\ \forall n\geq N\ \forall \zeta\in \cj\setminus \D(0,r):\; \frac1{\deg p_n}\log^+|p_n(\zeta)|>h(\zeta);$
 \item[$(iii)$]  $\displaystyle \sup_{n \geq 1} \frac{1}{\deg p_n} \log^+|p_n(z)|<+\infty$ on a non-empty open set $D \subset \mathbb{C}$.
\end{enumerate}
Then, for every compact, polynomially convex and regular set $E$,  the sequence  $$\left((p_{n}\circ...\circ p_{1})^{-1}(E)\right)_{n=1}^\infty$$ is convergent in Klimek's metric to the limit $\K[(p_{n})_{n=1}^\infty]$, independent of $E$.
In particular $\K[(p_{n})_{n=1}^\infty]$ is non-empty, compact, regular and polynomially convex.
\end{mainthm}

We call the set  $\K[(p_{n})_{n=1}^\infty]$ the non-autonomous Julia set generated by the sequence $(p_{n})_{n=1}^\infty$.  We establish basic properties of such Julia sets, showing that they are non-empty, compact, polynomially convex and regular. Our methods are primarily based on the approach developed in \cite{KK03} (see also \cite{KK18}).

We include  a  considerable 
amount of background material in  our presentation to make it as self-contained as possible.
Section \ref{s: GF} starts with preliminaries from logarithmic potential theory and complex approximation theory. Later, we introduce the notion of a guided sequence  (such sequences satisfy conditions  $(i)$ and $(ii)$   in the Main Theorem).
Theorem \ref{thm: guided} provides a characterization of guided sequences. Lemma \ref{prop: tnzez} and Corollary \ref{cor:przeciwobrazkola} sharpen some previously available results.
Examples of guided sequences appear in Subsection \ref{subsection: KW}. Among them are Kalm\'ar-Walsh sequences,  important in approximation theory, and sequences 
 from the class $\mathcal{B}$, previously studied in connection with
 generalized iteration.     Using concrete examples (see Remark \ref{rem:notinB} and Example \ref{ex: nonKW}) we discuss  relations between these classes and the class of
guided sequences. 
More new material appears in Section \ref{s: Julia}, including the definition of a non-autonomous filled Julia set, as well as proofs of   existence and properties of such sets. The approach using Klimek's metric is explained. The presentation ends with Subsection \ref{s: toy}, where we explore in more detail the case of the non-autonomous filled Julia set generated by $t_d=\frac{1}{2^{d-1}}T_d,\ d\in\{1,2,...\}$, that is, by Chebyshev   (minimal)  polynomials on the segment $[-1,1]$.


\section{The Green function, the logarithmic capacity  and asymptotic behavior of polynomial sequences}\label{s: GF}

\subsection{The Green function and related notions}

Let us first define the Green function and the logarithmic capacity of a set (mostly following \cite[Section II.4]{SaTo}).   Our definitions are not the most general ones, but  they still yield a notion of a regular set which agrees with the one considered in literature. In particular, finite sets are not regular.

\begin{definition}\label{defin: Greenfunction}
 Let $D \subset \mathbb{C}$ be an unbounded domain. Consider a function  $g_D:D \longrightarrow \mathbb{R}$ with the following properties:
\begin{enumerate}
\item $g_D$ is harmonic and positive in $D$;

\item $g_D(z)$ tends to $0$ as $z \to \partial D$;

\item $g_D(z)-\log |z|$ tends to a finite number $\gamma$ as $z \to \infty$.
\end{enumerate}
If such a function $g_D$ exists, it is called  the {\it Green function} of $D$ with pole at infinity.
\end{definition}
 It can be proved  that if  the Green function exists, it is unique.

Let $K\subset \cj$ be compact. Define the polynomially convex hull of $K$~~as $$\widehat{K}:=\left\{z \in \mathbb{C}\ |\; \forall p - \mbox{ polynomial: }\; |p(z)|\leq \|p\|_K\right\},$$ where $\|p\|_K:=\sup_{z\in K}|p(z)|$. Let $D_\infty^K$ denote the unbounded component of $\mathbb{C}\setminus K$. We have $\widehat{K}=\mathbb{C}\setminus D_\infty^K$. The set $K$ is called {\it polynomially convex} if $K=\widehat{K}$.  Note that  polynomial convexity of $K$  is equivalent to the connectedness of $\cj\setminus K$.

\begin{definition}\label{def:g_K}Let $K\subset \cj$ be  a compact set such that the Green function $g_{D_\infty^K}$ of $D_\infty^K$ with pole at infinity exists. We say then that $K$ is {\it regular} and define {\it the Green function $g_K:\cj\longrightarrow \rj$ of $K$} via formula
$$
g_K(z)=\begin{cases}
         g_{D_\infty^K}(z), & \mbox{ if } z\in  {D_\infty^K}\\
         0, & \text{ if } z\in \widehat{K}
       \end{cases}.
$$ The number  $\exp(-\gamma)$, where $\gamma$ is the limit from
Definition \ref{defin: Greenfunction}(3),  is
called   the {\it logarithmic capacity} of $K$ and denoted by ${\rm cap}(K)$.
\end{definition}

If $K$ is regular, then $g_K=g_{\widehat{K}}$ and it is a continuous function in view of Definitions \ref{defin: Greenfunction} and \ref{def:g_K}. Moreover $g_K$ is subharmonic in $\cj$.

\begin{example} \label{ex: koloGreen} (\cite[2.6]{Sic1981}) Let $a \in \mathbb{C}$, $R>0$ and let $K=\overline{\mathbb{D}}(a,R):=\{z: |z-a|\leq R\}$. Then $g_K(z)=\log^+(|z-a|/R):=\max\{0, \log (|z-a|/R)\}$ and ${\rm cap}(K)=R$.
\end{example}

\begin{example} \label{ex: odcGreen} (\cite[\S 1.2.b)]{PS})
$g_{[-1,1]}(z)=\log\left|z+\sqrt{z^2-1}\right|$, with the square root branch defined in $\mathbb{C}\setminus (-\infty,0)$ so that $\sqrt{1}=1$.  
\end{example}

\begin{example} \label{ex: elipsyGreen} (\cite[\S1.2.c)]{PS}) Fix an $R>1$. Let $K=E_R$ be the ellipse with foci $-1,+1$ and semiaxes $a=\frac{1}{2}\bigl(R+\frac{1}{R}\bigr), \ b= \frac{1}{2}\bigl(R-\frac{1}{R}\bigr)$. Then $g_K(z)=\log^+\left(
{\left|z+\sqrt{z^2-1}\right|}/{R}\right)$ and ${\rm cap}(K)=(a+b)/2=R/2$.
\end{example}

Let us note also an easy consequence of the definitions.
 If $K\subset \cj$ is compact and regular, then
\begin{equation}\label{e:L+} g_K \in \mathcal{L}^+,
 \end{equation}
where $\mathcal{L}^+$ is a special  class of functions, defined below.
\begin{definition}[{cf. \cite[Introduction]{Sic1981}}] Let $SH(\cj)$ denote the family of all subharmonic functions on $\cj$. We define the {\it Lelong class}
\[
\mathcal{L}:=\left\{u\in SH(\cj)\ \left|\quad \exists C=C(u) \ \forall z \in \mathbb{C}: \ u(z) \leq C+\log^+|z|\right.\right\}.
\]
An important subclass of the Lelong class is
\[
\mathcal{L}^+:=\left\{u \in \mathcal{L}\ \left|\quad \exists c=c(u)\  \forall z \in \mathbb{C}: \ c+\log^+|z| \leq u(z)\right.\right\}.
\]
\end{definition}

We will  use the following result:
\begin{proposition}[{\cite[Lemma 3.4, (4), (5) and (6)]{Sic1981}}]
\label{prop: boundedabove} \label{prop: u*inL} Let $\mathcal{U} \subset \mathcal{L}$ be a non-empty family such that each function $v \in \mathcal{U}$ is continuous.  Let $u:=\sup\{v: v \in \mathcal{U}\}$. The following  are equivalent:
\begin{enumerate}
\item[$(i)$] There exists a non-empty open  $D \subset  \mathbb{C}$ such that $u(z) <+\infty$ for all $z \in D$;
\item[$(ii)$] $u$ is bounded from above on every compact set $K\subset \mathbb{C}$ (i.e., the family $\mathcal{U}$ is locally uniformly bounded from above).
  \item[$(iii)$] $u^* \in \mathcal{L}$, where $u^*(z):=\limsup_{\zeta \to z}u(\zeta)$.
\end{enumerate}
\end{proposition}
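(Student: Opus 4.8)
The plan is to reduce everything to one uniform growth estimate. The implications $(ii)\Rightarrow(i)$ and $(iii)\Rightarrow(i)$ are immediate: $(ii)$ gives $u<+\infty$ on, say, the open unit disk, which is $(i)$; and if $u^*\in\mathcal{L}$ then $u^*(z)\le C+\log^+|z|<+\infty$ for every $z$, so from $u\le u^*$ (a property of the upper-semicontinuous regularization) we get $u<+\infty$ everywhere, again $(i)$. It therefore suffices to show that $(i)$ forces a single bound of the form $u(z)\le M+\log^+\bigl(|z-z_0|/\rho\bigr)$, since such a bound yields both $(ii)$ (finiteness on compacta) and $(iii)$ at once.

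I would obtain this bound in two stages. First, upgrade pointwise finiteness to uniform boundedness on a disk by Baire category. Each $v$ is continuous, so $u=\sup_{v\in\mathcal{U}}v$ is lower semicontinuous and each set $E_m:=\{z\in D: u(z)\le m\}$ is closed in $D$. Since $u<+\infty$ on $D$ we have $D=\bigcup_{m=1}^\infty E_m$, and as $D$ is a Baire space some $E_{m_0}$ has non-empty interior; shrinking it gives a closed disk $\D(z_0,\rho)\subset D$ with $u\le M:=m_0$ on it, hence $v\le M$ there for every $v\in\mathcal{U}$.

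Second, propagate this to all of $\cj$ with a $v$-independent constant. Fix $v\in\mathcal{U}$ and a large $R$, and apply the two-constants theorem on the annulus $\{\rho<|z-z_0|<R\}$: on the inner circle $v\le M$, while on the outer circle the Lelong bound gives $v\le \log(|z_0|+R)+C(v)$. Interpolating by the explicit harmonic measures $\log(R/|z-z_0|)/\log(R/\rho)$ and $\log(|z-z_0|/\rho)/\log(R/\rho)$ and letting $R\to\infty$, the term $C(v)/\log R$ vanishes and one is left with $v(z)\le M+\log^+\bigl(|z-z_0|/\rho\bigr)$, uniformly in $v$. Taking the supremum gives the desired bound on $u$, which proves $(ii)$; and since each $v$ is subharmonic and the family is now locally uniformly bounded above, the upper-envelope theorem (Brelot--Cartan) makes $u^*$ subharmonic, while the same bound passes to $u^*$ by continuity of $\log^+|z|$, so $u^*\in\mathcal{L}$, which is $(iii)$.

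The main obstacle is the second stage: converting the per-function estimate $v(z)\le\log^+|z|+C(v)$, with its $v$-dependent constant, into one bound valid for the whole family. The delicate point is the limit $R\to\infty$ in the harmonic-measure interpolation, where the defining growth of $\mathcal{L}$ guarantees $C(v)/\log R\to0$ so that the limiting majorant $M+\log^+(|z-z_0|/\rho)$ is free of $v$. A Phragm\'en--Lindel\"of comparison with the harmonic function $M+\log(|z-z_0|/\rho)$ on the exterior of $\D(z_0,\rho)$ is an equivalent route to the same estimate, provided one controls the behaviour at infinity.
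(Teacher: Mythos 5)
Your argument is correct, and it is essentially self-contained, which is more than the paper offers: the paper does not prove this proposition at all but quotes it from Siciak's Lemma 3.4. The two nontrivial ingredients you use are exactly the right ones. The Baire category step is sound because $u$, as a supremum of continuous functions, is lower semicontinuous, so each $E_m$ is relatively closed in $D$ and some $E_{m_0}$ contains a closed disk $\D(z_0,\rho)$. The propagation step via the two-constants theorem on the annulus $\{\rho<|z-z_0|<R\}$, with $R\to\infty$ killing the $v$-dependent constant $C(v)/\log R$, correctly yields the uniform majorant $M+\log^+\bigl(|z-z_0|/\rho\bigr)$, which dominates $u$ and hence $u^*$ (the majorant being continuous), and lies in $\mathcal{L}$ since $\log^+\bigl(|z-z_0|/\rho\bigr)-\log^+|z|$ is bounded above. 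Siciak's own route differs only in the second stage: instead of the two-constants theorem he observes that $v-M\in\mathcal{L}$ and $v-M\le 0$ on $\D(z_0,\rho)$, so $v-M$ is dominated by the extremal (Green) function of that disk, which equals $\log^+\bigl(|z-z_0|/\rho\bigr)$ by Example \ref{ex: koloGreen}; this phrasing generalizes directly to $\cj^N$, where your annulus interpolation is not available, while your version is more elementary in one variable. One small point worth making explicit in a write-up: the inequality $u\le u^*$ used in $(iii)\Rightarrow(i)$ holds here even under the punctured-limsup convention precisely because $u$ is lower semicontinuous, and the appeal to the Brelot--Cartan upper-envelope theorem in $(i)\Rightarrow(iii)$ is legitimate only after local uniform boundedness from above has been established, i.e.\ after $(ii)$, which is the order in which you present it.
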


 The Green function of a compact regular set $K \subset \mathbb{C}$ can be also characterized 
 (cf. \cite[the beginning of Chapter 5 and Theorem 5.1.7]{Klimekbook}) for   every  $z\in \mathbb{C}$  as follows:
\begin{align}\label{e:greenchar}
g_K (z) =\sup \left\{u(z):\ u \in \mathcal{L}, u|_K \leq 0\right\}=\\ = \sup\left\{\frac{1}{{\rm deg} P}\log|P(z)|:\ P\text{ a  non-constant polynomial}, \|P\|_K \leq 1\right\}\nonumber
\end{align}

 Immediately from this characterization one can deduce  that if $A, B\subset \cj$ are compact and regular, then
\begin{equation}\label{e:AsubsetB}
A\subset B\quad \Longrightarrow   \quad g_A \geq g_B.
\end{equation}
Also, the following polynomial transformation formula can be  deduced from (\ref{e:greenchar}). Let $f$ be a polynomial of degree $n \geq 1$. Then
\begin{equation}\label{e:Greenaprzeciwobrazu}
  g_{f^{-1}(A)}=\frac1n g_A\circ f.
\end{equation}
In particular,
the preimage under $f$ of a  regular set, e.g.,  a closed disk, a line segment or an ellipse, is a regular set.
 \begin{definition}\label{def:sublevel}
Let $\varepsilon>0$
and let $K\subset \cj$ be compact and regular.  Then the   $\varepsilon$-sublevel set  of $g_K$ (also called the {\it $\varepsilon$-augmentation} of $K$) is $K_\varepsilon:=\left\{z\in \cj:\; g_K(z)\leq \varepsilon\right\}$.
\end{definition}
 If in addition  $K$ is  polynomially convex,  the family $\{K_\varepsilon\}_{\varepsilon >0}$ forms a neighbourhood base of the set $K$ in $\cj$ as shown in \cite[Corollary 1]{Kl95}.  It was proved by M. Mazurek (published in  \cite[Proposition 5.11]{Sic1981})  that
\begin{equation}\label{e:Mazurek}
  g_{K_\varepsilon}=\max(0,g_K-\varepsilon).
\end{equation}
  This result implies the following properties of the sublevel sets:

\begin{proposition}[{cf. \cite[Proposition 2.3]{BCKS}}]\label{prop:sublevel}   Let $K$ be a regular compact subset of $\cj$. Then
  for every $\varepsilon >0$ the set $K_\varepsilon$ is polynomially convex and regular. Furthermore,
 ${\rm cap} (K_\varepsilon)=\exp{(-\gamma+\varepsilon)}  =\exp(\varepsilon){\rm cap}(K)$.
\end{proposition}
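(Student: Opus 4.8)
The plan is to derive all three assertions directly from the defining properties of the Green function in Definition~\ref{defin: Greenfunction}, together with the continuity of $g_K$ for regular $K$ and the fact, recorded in (\ref{e:L+}), that $g_K\in\mathcal{L}^+$. First I would establish that $K_\varepsilon$ is compact. Since $g_K$ is continuous, $K_\varepsilon=\{z:g_K(z)\le\varepsilon\}$ is closed. Since $g_K\in\mathcal{L}^+$, there is a constant $c$ with $c+\log^+|z|\le g_K(z)$, so $g_K(z)\to\infty$ as $|z|\to\infty$; hence $K_\varepsilon$ is bounded, and therefore compact. In particular $\cj\setminus K_\varepsilon$ has exactly one unbounded component, which I denote $D_\infty^{K_\varepsilon}$.

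Next, polynomial convexity. I would use $\cj\setminus K_\varepsilon=\{z:g_K(z)>\varepsilon\}$ and note that, because $g_K=0$ on $\widehat K$ while $\varepsilon>0$, this superlevel set is contained in $D_\infty^K$, where $g_K$ is harmonic. The key point is to rule out bounded connected components of this open set: if $V$ were such a component, then $\partial V\subset\{z:g_K(z)=\varepsilon\}$ by continuity, and $g_K$ would be harmonic on $V$, continuous on $\overline V$, and equal to $\varepsilon$ on $\partial V$; the maximum principle then forces $g_K\le\varepsilon$ on $V$, contradicting $g_K>\varepsilon$ there. Hence $\cj\setminus K_\varepsilon$ coincides with its single unbounded component $D_\infty^{K_\varepsilon}$, so it is connected, which is exactly polynomial convexity of $K_\varepsilon$.

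Then regularity and the capacity formula follow at once by exhibiting the Green function. I would set $G:=g_K-\varepsilon$ on $D_\infty^{K_\varepsilon}=\{z:g_K(z)>\varepsilon\}$ and verify the three conditions of Definition~\ref{defin: Greenfunction}: $G$ is harmonic and positive there (as $g_K$ is harmonic and exceeds $\varepsilon$); $G(z)\to0$ as $z\to\partial D_\infty^{K_\varepsilon}$, since $g_K\to\varepsilon$ on that boundary; and $G(z)-\log|z|=\bigl(g_K(z)-\log|z|\bigr)-\varepsilon\to\gamma-\varepsilon$ as $|z|\to\infty$, where $\gamma$ is the constant of Definition~\ref{defin: Greenfunction}(3) for $K$. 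This shows $K_\varepsilon$ is regular with $g_{K_\varepsilon}=\max(0,g_K-\varepsilon)$, recovering Mazurek's formula (\ref{e:Mazurek}), and that the corresponding constant for $K_\varepsilon$ equals $\gamma-\varepsilon$; hence ${\rm cap}(K_\varepsilon)=\exp\bigl(-(\gamma-\varepsilon)\bigr)=\exp(-\gamma+\varepsilon)$.

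The main obstacle is the polynomial-convexity step, specifically ruling out bounded components of the superlevel set $\{z:g_K(z)>\varepsilon\}$; compactness and the capacity computation are essentially immediate once the defining properties are available. Alternatively, one can short-circuit this step by invoking Mazurek's formula (\ref{e:Mazurek}) directly, reading off regularity from the continuity of $\max(0,g_K-\varepsilon)$ and deducing polynomial convexity from the general properties of the Green function; I nonetheless prefer the direct verification above, as it is transparent and fully self-contained within the definitions introduced in this section.
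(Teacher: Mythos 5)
Your proof is correct, but it takes a genuinely more self-contained route than the paper. The paper gives no direct argument at all: it records Mazurek's formula (\ref{e:Mazurek}), $g_{K_\varepsilon}=\max(0,g_K-\varepsilon)$, as an imported result and states that the proposition follows from it (with a pointer to \cite[Proposition 2.3]{BCKS}). You instead verify everything from the definitions: compactness of $K_\varepsilon$ from continuity of $g_K$ and $g_K\in\mathcal{L}^+$ (see (\ref{e:L+})), polynomial convexity by excluding bounded components of the superlevel set $\{g_K>\varepsilon\}$ via the maximum principle, and regularity together with the capacity value by checking that $g_K-\varepsilon$ satisfies the three conditions of Definition \ref{defin: Greenfunction} on the complement. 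In doing so you effectively re-prove the restriction of Mazurek's formula to regular $K$, which the paper treats as a black box; your version buys transparency and independence from the external references at the cost of length. The individual steps all check out: $\{g_K>\varepsilon\}\subset D_\infty^K$ because $g_K$ vanishes on $\widehat K$ and $\varepsilon>0$, so harmonicity is available where you invoke the maximum principle; the boundary of any component of the superlevel set lies in $\{g_K=\varepsilon\}$ by continuity; and the normalization at infinity gives $\gamma-\varepsilon$, hence ${\rm cap}(K_\varepsilon)=\exp(-\gamma+\varepsilon)$ as claimed.
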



\subsection{Properties of sequences of  polynomials}  Let us start with a property of   a single polynomial.
\begin{lemma}\label{lem: theta}
  If $P:\cj\longrightarrow \cj$ is a polynomial  and  $\deg P\geq 2$, then
  \begin{equation}\label{e:theta}
    \forall \theta>1 \ \exists R=R(\theta)>0: \quad |z|\geq R\quad \Longrightarrow \quad |P(z)|\geq \theta |z|.
  \end{equation}
\end{lemma}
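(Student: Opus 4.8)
The plan is to reduce the statement to the elementary growth estimate that $|P(z)|/|z|\to\infty$ as $|z|\to\infty$ whenever $\deg P\geq 2$, from which the claim follows immediately by the definition of a limit. The whole content is that a polynomial of degree at least $2$ dominates the linear function $\theta|z|$ far from the origin, and the role of the hypothesis $\deg P\geq 2$ is exactly to guarantee this.

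First I would write $P$ in terms of its leading coefficient: set $d:=\deg P\geq 2$ and $P(z)=a_dz^d+a_{d-1}z^{d-1}+\cdots+a_0$ with $a_d\neq 0$. For $z\neq 0$ I factor out $z^d$, so that
\[
P(z)=z^d\left(a_d+\frac{a_{d-1}}{z}+\cdots+\frac{a_0}{z^d}\right).
\]
The parenthesized expression tends to $a_d$ as $|z|\to\infty$, so there is an $R_1>0$ with $\bigl|a_d+\tfrac{a_{d-1}}{z}+\cdots+\tfrac{a_0}{z^d}\bigr|\geq \tfrac{|a_d|}{2}$ for all $|z|\geq R_1$; consequently $|P(z)|\geq \tfrac{|a_d|}{2}|z|^d$ there. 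Next, because $d\geq 2$ we have $d-1\geq 1$, so $|z|^{d-1}$ is unbounded and I can pick an $R_2>0$ with $|z|^{d-1}\geq 2\theta/|a_d|$ for $|z|\geq R_2$. Taking $R:=\max\{R_1,R_2\}$ yields, for every $|z|\geq R$,
\[
|P(z)|\geq \frac{|a_d|}{2}\,|z|^d=\frac{|a_d|}{2}\,|z|^{d-1}\cdot|z|\geq \theta|z|,
\]
which is exactly \eqref{e:theta}.

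There is no real obstacle here; the only point worth flagging is that the hypothesis $\deg P\geq 2$ is not decorative but essential. It is precisely the inequality $d-1\geq 1$ that forces $|P(z)|/|z|\to\infty$; for a polynomial of degree $1$, say $P(z)=a_1z+a_0$, this ratio tends to the finite limit $|a_1|$, and then the conclusion fails for any $\theta>|a_1|$. Thus the proof is a short estimate, with the degree assumption doing all the work.
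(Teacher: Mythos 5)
Your proof is correct and follows the same idea as the paper, which simply observes that $|P(z)|/|z|\to\infty$ as $|z|\to\infty$ because $\deg P\geq 2$; you have merely written out the standard leading-term estimate that justifies this observation.
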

\begin{proof} Observe that
${|P(z)|}/{|z|}\longrightarrow \infty,$ if $|z|\rightarrow \infty$, since $\deg P\geq 2$.
\end{proof}

Now we turn to sequences. First
we propose a modification of
\cite[Definition 3.1]{CHPP}:
\begin{definition} \label{def:K-guided} Let $K \subset \mathbb{C}$ be a    regular compact set. Let further $(p_n)_{n=1}^\infty$ be a sequence of complex polynomials such that $\deg p_n =n$.
The sequence $(p_n)_{n=1}^\infty$ is called {\it $K$-guided} if
there exist  $\delta>0, a>0$ and $b\in\rj$  such that \begin{enumerate} \item[(*)]  $  K ~\cup~{\bigcup_{n=1}^{\infty}p_n^{-1}(0)}\subset
\D(0,\delta)$, \item[(**)]
  $ \forall z\in \cj\setminus \D(0,\delta) \ \exists U$ -- a neighbourhood of $z$ such that:
\begin{equation*}\label{e:ag-b}
\exists N=N(U)\geq 1\ \forall n\geq N \ \forall \zeta\in U:\;   \frac{1}{n}\log^+|p_n(\zeta)| \geq ag_K(\zeta)-b.
\end{equation*}
 \end{enumerate}
\end{definition}
  Actually, in \cite[Definition 3.1]{CHPP}  $K$ belonged to a bigger family of compact sets, where   a more general definition of the Green function was applicable.
 The authors  also assumed instead of (**) that inequality $$\liminf_{n\rightarrow \infty} \frac{1}{n}\log^+|p_n(z)| \geq ag_K(z)-b$$ should hold ``locally uniformly'' in  $\mathbb{C}\setminus {\rm conv} K$ (instead of in $\mathbb{C}\setminus \overline{\mathbb{D}}(0,\delta)$). We prefer to write the explicit meaning of this notion, namely (**), following personal  communication with the authors.

 Observe  that because of the non-negativity of $g_K$, if $(p_n)_{n=1}^\infty$ is $K$-guided with a number $a$,   it is also $K$-guided with any number $a'\in (0,a)$. In particular without loss of generality one can assume that $a\in (0,1]$.
  On the other hand,   $a \leq 1$  necessarily  for  sequences satisfying an additional assumption, see Proposition \ref{prop: jedynka}.

Note that the assumption  $\deg p_n=n$ can be relaxed (in particular, all polynomials may be allowed to have the same degree).   The fractions $\frac{1}{n}$ in (**) and in all statements based on this condition should be then replaced by $\frac{1}{\deg p_n}$, $n\in\{1,2,...\}$.
 However,   in what follows  only  finitely many polynomials   will be    allowed to be of degree 1, the others must have higher degrees. Equivalently, $\alpha:=\limsup_{n \to \infty} \frac{1}{{\rm deg} \ p_n} \leq 1/2$.   In this case,  without loss of generality we can assume that only $p_1$ is allowed to be of degree 1. Indeed, if $\nu_0:=\min\{\nu \in \mathbb{N}|\ \forall n \geq \nu: \ \deg p_n \geq 2\}\geq 3$, we can work instead with polynomials $P_1:=p_{\nu_0-1}\circ...\circ p_1, P_2:=p_{\nu_0},..., P_n:=p_{\nu_0-2+n}, \ n \geq 2$.  Another possibility is to take $P_1$ to be the composition of all polynomials of degree 1 in the sequence   (if any)  and then put $P_2:=p_{k_2}$ with $k_2=\min\{n: \deg p_n>1\}$,  $P_3:=p_{k_3}$ with $k_3=\min\{n\neq k_2: \deg p_n>1\}$ and so on.

We will now prove an important technical result. Note that a $K$-guided sequence (in the more general sense as explained above) satisfies the assumptions of the following Lemma \ref{prop: tnzez} provided $a>\alpha$ (with a small change of $b$ if needed).

\begin{lemma} \label{prop: tnzez} Let $(p_n)_{n=1}^\infty$ be a  sequence of polynomials with   $\deg p_1 \geq 1 $ and $\deg p_n \geq 2 $ for $n \in \{2,3,...\}$.
  Define $\alpha:=\limsup_{n \to \infty} \frac{1}{\deg p_n}$.
   Assume  that there exist an $r>0$ and a function $h:\cj\longrightarrow \rj$ satisfying $h(z)-\alpha\log|z|\longrightarrow \infty \text{ as } |z|\to \infty$  such that   \begin{enumerate}
\item[$(i)$] $
\bigcup_{n=1}^\infty p_n^{-1}(0) \subset
 \D(0,r)$;
 \item[$(ii)$] $\forall z\in \mathbb{C} \setminus \D(0,r) \ \exists U$ -- a neighbourhood of $z$ such that
 \[\exists N=N(U)>1\ \forall n\geq N\ \forall \zeta\in U:\; \frac1{\deg p_n}\log^+|p_n(\zeta)|>h(\zeta).\]
                                                                                                          \end{enumerate}

Then
$\ \exists R>0\ \forall n\geq 2:\quad |z|\geq R\quad \Longrightarrow \quad |p_n(z)|\geq e|z|.$
\end{lemma}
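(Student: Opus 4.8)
The plan is to control the behaviour of each $p_n$ near infinity through its factorization, reducing the whole problem to a single lower estimate on leading coefficients that is uniform in $n$. By hypothesis $(i)$ every zero of every $p_n$ lies in $\D(0,r)$, so writing $d_n:=\deg p_n$ I may factor $p_n(z)=c_n\prod_{j=1}^{d_n}(z-a_{n,j})$ with $|a_{n,j}|\le r$. For $|z|\ge 2r$ each factor satisfies $|z-a_{n,j}|\ge |z|-r\ge |z|/2$, giving the lower bound $|p_n(z)|\ge |c_n|\,(|z|/2)^{d_n}$. Thus everything reduces to a lower estimate on the leading coefficients $|c_n|$ that does not depend on $n$, after which the factor $(|z|/2)^{d_n}$ with $d_n\ge 2$ will easily beat $e|z|$ for $|z|$ large.

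To produce such a bound I would fix a single point $z_*$ with $R_*:=|z_*|>r$ and $h(z_*)>0$ (possible since $h\to\infty$ at infinity), and apply $(ii)$ at $z_*$: there is an $N$ so that $\frac1{d_n}\log^+|p_n(z_*)|>h(z_*)>0$ for all $n\ge N$, whence $\log^+$ may be replaced by $\log$ and $|p_n(z_*)|>e^{d_n h(z_*)}$. Comparing with the trivial upper bound $|p_n(z_*)|\le |c_n|(R_*+r)^{d_n}$ yields $|c_n|>\beta^{d_n}$ with $\beta:=e^{h(z_*)}/(R_*+r)>0$, uniformly for $n\ge N$. Feeding this into the previous estimate, for $n\ge N$ and $|z|\ge 2r$ one gets $|p_n(z)|>(\beta|z|/2)^{d_n}$; once $|z|\ge 2/\beta$ the base exceeds $1$, so using $d_n\ge 2$ I may drop to the square $(\beta|z|/2)^2=\beta^2|z|^2/4$, which exceeds $e|z|$ as soon as $|z|\ge 4e/\beta^2$. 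Taking $R_1:=\max\{2r,2/\beta,4e/\beta^2\}$ settles all $n\ge N$ simultaneously, and the finitely many indices $2\le n<N$ are disposed of individually by Lemma~\ref{lem: theta} (with $\theta=e$), each yielding a radius $R_n$, so that $R:=\max\{R_1,R_2,\dots,R_{N-1}\}$ works.

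The delicate point is uniformity in $n$. For a single polynomial the statement is merely Lemma~\ref{lem: theta}, but its radius depends on the polynomial, so the real content is a bound independent of $n$. It is tempting to use $(ii)$ directly, bounding $\log|p_n(z)|\ge d_n h(z)$ and hoping that $d_n h(z)\ge 1+\log|z|$; this route fails when $h$ grows slowly (say slower than $\tfrac12\log|z|$, which is permitted when $\alpha$ is small), because for the smallest admissible degree $d_n=2$ the quantity $2h(z)$ can dip below $1+\log|z|$ for large $|z|$. The key realization is that $(ii)$ should be invoked only at one exterior point, to pin the leading coefficients from below, after which it is the degree bound $d_n\ge 2$ — rather than the size of $h$ — that supplies the growth needed to dominate $e|z|$. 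It only remains to verify the harmless technical points: that $h(z_*)>0$ legitimizes replacing $\log^+$ by $\log$ at $z_*$, and that $\beta>0$ so the thresholds $2/\beta$ and $4e/\beta^2$ are finite.
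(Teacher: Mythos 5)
Your proof is correct, but it follows a genuinely different route from the one in the paper. The paper's argument is potential-theoretic: it fixes a large radius $R_0$, uses the compactness of the circle $\partial\D(0,R_0)$ to extract from the local statements in $(ii)$ a single index $N$ valid on the whole circle, combines the resulting estimate with the inequality $\frac{1}{\deg p_n}\log R_0\leq \alpha\log R_0+1$ (valid for large $n$ by the definition of $\alpha$) to obtain $\log|p_n(\zeta)|-\log|\zeta|>1$ on $\partial\D(0,R_0)$, and then propagates this bound to all of $\{|z|\geq R_0\}$ by the Minimum Principle applied to the harmonic function $z\longmapsto\log|p_n(z)|-\log|z|$ (harmonic there precisely because of $(i)$). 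You instead use $(i)$ to factor $p_n$ and get the elementary lower bound $|p_n(z)|\geq |c_n|(|z|/2)^{d_n}$ for $|z|\geq 2r$, and invoke $(ii)$ at a \emph{single} exterior point only to pin the leading coefficients from below by $\beta^{d_n}$; the growth needed to beat $e|z|$ then comes from $d_n\geq 2$ rather than from $h$. Both proofs dispose of the finitely many initial indices via Lemma~\ref{lem: theta}. Your version is more elementary (triangle inequality in place of the Minimum Principle) and makes visible that the hypothesis $h(z)-\alpha\log|z|\to\infty$ is used only to guarantee one point $z_*$ outside $\D(0,r)$ with $h(z_*)>0$, so in particular the quantity $\alpha$ plays no role in your argument; the paper's version stays within the Green-function framework used throughout the rest of the article and yields the explicit exterior estimate $\log|p_n(z)|-\log|z|>1$ on all of $\{|z|\geq R_0\}$ for $n\geq N$ as a byproduct. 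All the steps you flag as delicate do check out: a point $z_*$ with $|z_*|>r$ and $h(z_*)>0$ exists since $h(z)-\alpha\log|z|\to\infty$ and $\alpha\geq 0$; the inequality $h(z_*)>0$ forces $|p_n(z_*)|>1$ and hence legitimizes replacing $\log^+$ by $\log$ there; and $\beta=e^{h(z_*)}/(R_*+r)>0$, so your thresholds are finite.
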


\begin{proof}
 Choose $R_0 > \max\{2,2r\}$ such that
 \begin{equation}\label{e:h}h(z) -\alpha \log|z|>2 \quad \text{ if }\ |z| \geq R_0/2.\end{equation}

 Note first that  since $\alpha=\limsup_{n\rightarrow \infty} \frac1{{\rm deg} \ p_n}$,
 \begin{equation}\label{e:alpha}\exists N^\star>1\ \forall n\geq N^\star:\quad
 \frac1{{\rm deg} \ p_n}\log R_0\leq \alpha\log R_0+1.
 \end{equation}

By $(ii)$ if $z\in \partial \D(0,R_0)$, then there exist a neighbourhood  $U$ of $z$ and an integer $N_z$ such that
$\forall \zeta\in U\ \forall n\geq N_z:\; \frac1{{\rm deg} \ p_n}\log^+|p_n(\zeta)|>h(\zeta).$
By compactness of the circle there exists $N\geq N^\star$ such that
\begin{equation*}
  \forall \zeta\in \partial\D(0,R_0)\ \forall n\geq N:\quad  \frac1{{\rm deg} \ p_n}\log^+|p_n(\zeta)|>h(\zeta).
\end{equation*}
Note that because of (\ref{e:h}) the right hand side of the inequality above is greater than 2, hence we can   replace  $\log^+$ with $\log$, obtaining
\begin{equation}\label{e:p_n}
  \forall \zeta\in \partial\D(0,R_0)\ \forall n\geq N:\quad  \frac1{{\rm deg} \ p_n}\log |p_n(\zeta)|>h(\zeta).
\end{equation}
  Recall  that $\alpha \in [0,1/2]$.
Combining (\ref{e:h}), (\ref{e:alpha}) and (\ref{e:p_n}), we get
\begin{align*}
  \forall \zeta\in \partial\D(0,R_0)\ \forall n\geq N: \quad
\frac1{{\rm deg} \ p_n}\log |p_n(\zeta)|-\frac1{{\rm deg} \ p_n}\log |\zeta|>\\ h(\zeta)-\alpha\log |\zeta|-1> 1.
\end{align*}
 In consequence,  for all $\zeta\in\partial \D(0,R_0)$
\begin{equation}\label{e:varepsilon}
  \forall n\geq N:\quad \log |p_n(\zeta)|- \log |\zeta| > 1.
\end{equation}

Since all zeros of the  polynomials $p_n$ are contained in
$\D(0,r) \varsubsetneq \D(0,R_0)$,  we may apply the Minimum Principle for harmonic functions to  $z\longmapsto  \log|p_n(z)|-  \log|z| $
for $ n \geq N $ and conclude that    (\ref{e:varepsilon}) is satisfied for all $\zeta $ in the complement of the open disk $\mathbb{D}(0,R_0)$. Thus
$$
\forall n\geq N:\quad  |z|\geq R_0\quad \Longrightarrow \quad |p_n(z)| \geq  e|z|.
$$

Now, by Lemma \ref{lem: theta}
$$\forall j\in\{2,3,...,N-1\}\ \exists R_j>0:\quad |z|\geq R_j\quad \Longrightarrow \quad |p_j(z)|\geq e|z|.$$
It suffices to take $R:=\max\{R_j: j\in\{0,2,3,...,N-1\}\}.$
\end{proof}

\begin{cor}\label{cor:przeciwobrazkola}
   If $(p_n)_{n=1}^\infty$ is a   sequence of   polynomials satisfying the assumptions of Lemma  $\ref{prop: tnzez}$,
   then
  $$\exists \varrho>0\; \forall R\geq \varrho\; \forall n\geq 2:\quad  p_n^{-1}\left(\D(0,R)\right)\subset \D(0,R).$$
  \end{cor}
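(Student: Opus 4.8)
The plan is to deduce this corollary directly from the uniform growth estimate established in Lemma \ref{prop: tnzez}. That lemma produces a constant --- call it $R_\star$ to avoid clashing with the radius $R$ appearing in the corollary --- with the property that $|z|\geq R_\star$ forces $|p_n(z)|\geq e|z|$ for every $n\geq 2$. I would simply set $\varrho:=R_\star$ and verify the claimed inclusion by contraposition, so essentially no new ideas beyond the lemma are required.

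Concretely, I would fix $R\geq \varrho$ and $n\geq 2$, and rewrite the two sets involved as sublevel sets of moduli: $p_n^{-1}(\D(0,R))=\{z\in\cj:\ |p_n(z)|\leq R\}$ and $\D(0,R)=\{z\in\cj:\ |z|\leq R\}$. Thus the desired inclusion $p_n^{-1}(\D(0,R))\subset \D(0,R)$ is equivalent to the implication: if $|z|>R$, then $|p_n(z)|>R$. To prove this implication, I would take any $z$ with $|z|>R$; since $R\geq\varrho=R_\star$, this gives $|z|>R_\star$, in particular $|z|\geq R_\star$, so the hypothesis of Lemma \ref{prop: tnzez} is met and $|p_n(z)|\geq e|z|$. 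Using $e>1$ together with $|z|>R>0$ then yields $|p_n(z)|\geq e|z|>eR>R$, which is exactly what is needed. Taking the contrapositive gives $|p_n(z)|\leq R\Rightarrow |z|\leq R$, i.e. the inclusion.

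There is no genuine obstacle here: the content of the statement is entirely carried by Lemma \ref{prop: tnzez}, and the corollary is a short packaging of that estimate. The only points deserving a moment's care are the bookkeeping of strict versus non-strict inequalities --- one must observe that $|z|>R\geq R_\star$ does trigger the non-strict hypothesis $|z|\geq R_\star$ of the lemma --- and the choice $\varrho=R_\star$, which guarantees that every admissible radius $R$ lies at or above the threshold so that the growth estimate is available on the whole exterior $\{|z|>R\}$. Finally, it is worth stressing that the factor $e>1$ is precisely what upgrades the growth estimate into the \emph{strict} inequality $|p_n(z)|>R$, and hence what pushes every point outside $\D(0,R)$ out of the preimage $p_n^{-1}(\D(0,R))$.
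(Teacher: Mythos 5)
Your proof is correct and follows the only natural route: the paper itself gives no proof of this corollary, treating it as an immediate consequence of Lemma \ref{prop: tnzez}, and your argument (take $\varrho$ equal to the radius produced by the lemma and pass to the contrapositive, using $|p_n(z)|\geq e|z|>R$ for $|z|>R\geq\varrho$) is exactly the intended deduction. The bookkeeping of strict versus non-strict inequalities is handled correctly.
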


Let  us propose another definition.

\begin{definition} \label{def:guided} Consider a sequence of polynomials $(p_n)_{n=1}^\infty$ such that   $\deg p_1 \geq 1 $ and
$\deg p_n \geq 2 $ for $n \in \{2,3,...\}$.  Define $\alpha:=\limsup_{n \to \infty} \frac{1}{\deg p_n}$.    We say that $(p_n)_{n=1}^\infty$ is a {\it guided} sequence of polynomials if  there exist an $r>0$ and a function $h:\cj\longrightarrow \rj$ satisfying $h(z)-\alpha\log|z|\longrightarrow \infty \text{ as } |z|\to \infty$  such that    \begin{enumerate}
\item[$(i)$] $
\bigcup_{n=1}^\infty p_n^{-1}(0) \subset
 \D(0,r)$;
 \item[$(ii)$]
$\exists N >1\ \forall n\geq N\ \forall \zeta\in  \mathbb{C} \setminus \D(0,r):\;\ \frac1{\deg p_n}\log^+|p_n(\zeta)|>h(\zeta).$
                                                                                                          \end{enumerate}
\end{definition}

 Observe that if $(p_n)_{n=1}^\infty$ is a guided sequence, then for every mapping $\varphi:\nj\longrightarrow \nj$ such that 
   $\varphi(k)>1$ for all $k>1$ , the sequence $(p_{\varphi(n)})_{n=1}^\infty$ is also  guided. If $\deg p_1>1$, the assumption 
 $\varphi(k)>1$ for all $k>1$  can be omitted.

 Note that every guided sequence $(p_n)_{n=1}^\infty$   satisfies the assumptions of Lemma \ref{prop: tnzez}. Moreover, $$ \forall z\in\cj\setminus \D(0,r):\quad \liminf_{n \to \infty} \frac{1}{\deg p_n}\log^+|p_n(z)|\geq h(z).$$

\begin{example}\label{ex:guidedinfty}
If $p_n(z):=n^nz^n$ for $n \geq 1$, then $(p_n)_{n=1}^\infty$ is guided, since   $\alpha =0$ and
 $$\forall n\geq 2 \ \forall z\in\cj\setminus \D(0,1): \quad \frac{1}{n}\log^+|p_n(z)|=\log n+\log^+|z|> \log^+|z|.$$
\end{example}

There are of course sequences that are not guided. First one can take polynomials with zeros that are not uniformly bounded, e.g. $p_n(z)=z^n-n^n$, $n\geq 1$. Note also the following example.

\begin{example}
Let $p_n(z)=2^{-n^2}z^n$ for $ n\geq 1$. Then for any fixed $N\geq 1$ we have $   |z|\leq 2^{N} \Longrightarrow \forall n\geq N:\ |z^n|\leq 2^{n^2}$. Fix $z\in \cj$. We have $\liminf_{n\rightarrow +\infty} \frac1n\log^+|p_n(z)|=0$, since  $\frac1n\log^+|p_n(z)|\neq 0$ only for a finite number of integers $n$. Note that  the zeros of $p_n$ are uniformly bounded.
\end{example}

 We are ready to present the
main result of this section -- a characterization of  guided sequences.
 We relax the assumptions on the degrees for the $K$-guided sequences as suggested under Definition \ref{def:K-guided}.

\begin{theorem} \label{thm: guided} Let $(p_n)_{n=1}^\infty$ be a sequence of complex polynomials such that  $\deg p_1 \geq 1$ and $\deg p_n \geq 2$ for $n \in \{2,3,...\}$.  Define
 $\alpha:=\limsup_{n \to \infty} \frac{1}{\deg p_n}$.

The following conditions are equivalent:
\begin{enumerate}
\item[$(i)$] $(p_n)_{n=1}^\infty$ is a guided sequence;

 \item[$(ii)$] $(p_n)_{n=1}^\infty$ satisfies the assumptions of Lemma $\ref{prop: tnzez}$;

\item[$(iii)$] $\exists R>1\ \forall n   \geq 2: \quad  p_n^{-1}\left(\D(0,R)\right)\subset \D(0,R)$;

 \item[$(iv)$]
 For every $K\subset \cj$ compact  and regular  the sequence $(p_n)_{n=1}^\infty$ is $K$-guided  with $a\in(\alpha,1]$;

\item[$(v)$] There exists $K\subset \cj$ compact    and regular  such that $(p_n)_{n=1}^\infty$ is $K$-guided   with $a\in(\alpha,1]$.\end{enumerate}
\end{theorem}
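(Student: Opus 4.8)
The plan is to prove the five conditions equivalent by establishing the chain
$(i)\Rightarrow(ii)\Rightarrow(iii)\Rightarrow(iv)\Rightarrow(v)\Rightarrow(ii)$ together with the shortcut $(iii)\Rightarrow(i)$. Since these give $(i)\Rightarrow(ii)\Rightarrow(iii)\Rightarrow(i)$ and $(iii)\Rightarrow(iv)\Rightarrow(v)\Rightarrow(ii)\Rightarrow(iii)$, all five statements then lie in one cycle and are mutually equivalent. The engine of the whole argument is a single uniform-in-$n$ estimate distilled from $(iii)$ via Green-function monotonicity.

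The two easy links come first. For $(i)\Rightarrow(ii)$ I would simply note that the global inequality of Definition \ref{def:guided}$(ii)$ is, a fortiori, the local one required in Lemma \ref{prop: tnzez}$(ii)$ (take any neighbourhood for $U$ and keep the same $N$); this is the remark already made below Definition \ref{def:guided}. For $(ii)\Rightarrow(iii)$ I would invoke Corollary \ref{cor:przeciwobrazkola}, which yields $\varrho>0$ with $p_n^{-1}(\D(0,R))\subset\D(0,R)$ for all $R\ge\varrho$ and all $n\ge2$; choosing $R:=\max\{\varrho,2\}$ gives the required $R>1$. The heart of the matter is then to extract from $(iii)$ the uniform lower bound
\[
\frac{1}{\deg p_n}\log^+|p_n(z)|\ \ge\ \log^+\frac{|z|}{R}
\qquad\text{for all } n\ge2,\ z\in\cj,
\]
which I will call $(\star)$. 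Indeed $\D(0,R)$ is compact and regular with $g_{\D(0,R)}(z)=\log^+(|z|/R)$ by Example \ref{ex: koloGreen}, and $p_n^{-1}(\D(0,R))$ is again compact and regular by \eqref{e:Greenaprzeciwobrazu}; since $p_n^{-1}(\D(0,R))\subset\D(0,R)$, monotonicity \eqref{e:AsubsetB} gives $g_{p_n^{-1}(\D(0,R))}\ge g_{\D(0,R)}$. As $R\ge1$ we have $\log^+|p_n|\ge\log^+(|p_n|/R)$, so by \eqref{e:Greenaprzeciwobrazu} the left side of $(\star)$ dominates $g_{p_n^{-1}(\D(0,R))}$, and the latter dominates $\log^+(|z|/R)$. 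Crucially $(\star)$ holds for \emph{all} $n\ge2$ at once.

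From $(\star)$ the global conditions drop out. For $(iii)\Rightarrow(i)$ I would set $r:=\max\{R,\rho_1\}$, where $\rho_1$ bounds the zeros of $p_1$, and $h(z):=\log^+(|z|/R)-1$; then $(\star)$ gives the strict inequality of Definition \ref{def:guided}$(ii)$ with $N=2$, while $h(z)-\alpha\log|z|=(1-\alpha)\log|z|-\log R-1\to\infty$ because $\alpha\le 1/2<1$. For $(iii)\Rightarrow(iv)$, given any compact regular $K$ I would take $a:=1\in(\alpha,1]$ (legitimate since $\alpha\le 1/2$); because $g_K\in\mathcal{L}^+\subset\mathcal{L}$ by \eqref{e:L+} there is a $C$ with $g_K(z)\le C+\log^+|z|$, so for $|z|\ge R$ one gets $g_K(z)-(C+\log R)\le\log^+(|z|/R)\le\frac{1}{\deg p_n}\log^+|p_n(z)|$ from $(\star)$; choosing $\delta$ large enough to contain $K$ and all zeros, this is exactly Definition \ref{def:K-guided}(**) with $b=C+\log R$ and $N=2$. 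Then $(iv)\Rightarrow(v)$ is trivial (specialize to $K=\D(0,1)$), and for $(v)\Rightarrow(ii)$ I would, given a $K$-guided sequence with $a\in(\alpha,1]$ and constants $\delta,b$, put $r:=\delta$ and $h(z):=a\,g_K(z)-b-1$: condition $(i)$ of Lemma \ref{prop: tnzez} is $(*)$, the local inequality $(**)$ yields $\frac{1}{\deg p_n}\log^+|p_n|\ge a g_K-b>h$ on each $U$ (replace $N(U)$ by $\max\{N(U),2\}$), and $g_K\in\mathcal{L}^+$ forces $h(z)-\alpha\log|z|\ge(a-\alpha)\log|z|+\mathrm{const}\to\infty$ since $a>\alpha$.

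I expect the main obstacle to be conceptual rather than computational. Both the guided and the $K$-guided conditions demand a \emph{single} threshold $N$ valid on the entire exterior $\cj\setminus\D(0,r)$, whereas Lemma \ref{prop: tnzez} and Definition \ref{def:K-guided} supply only local thresholds $N(U)$; because an exterior is noncompact, a naive compactness/patching argument cannot upgrade local to global. The device that removes this difficulty is to route everything through $(iii)$: the inclusion $p_n^{-1}(\D(0,R))\subset\D(0,R)$ is already uniform in $n$, and Green-function monotonicity converts it into the globally uniform bound $(\star)$, out of which the global statements $(i)$ and $(iv)$ fall with the explicit choice $N=2$. Verifying the Lelong-class growth condition $h(z)-\alpha\log|z|\to\infty$ in each implication, which is where the standing bound $\alpha\le 1/2$ and the requirement $a>\alpha$ are consumed, is the only other point needing care.
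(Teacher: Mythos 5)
Your proposal is correct and follows essentially the same route as the paper: the identical implication graph $(i)\Rightarrow(ii)\Rightarrow(iii)$, $(iii)\Rightarrow(i)$, $(iii)\Rightarrow(iv)\Rightarrow(v)\Rightarrow(ii)$, with Corollary \ref{cor:przeciwobrazkola} supplying $(ii)\Rightarrow(iii)$, the choice $h=ag_K-b-1$ for $(v)\Rightarrow(ii)$, and Green-function monotonicity under the inclusion $p_n^{-1}\left(\D(0,R)\right)\subset\D(0,R)$ driving $(iii)\Rightarrow(i),(iv)$. The only cosmetic difference is that in this last step the paper passes through the augmentation $K_\varepsilon\supset\D(0,R)$ and Mazurek's formula \eqref{e:Mazurek}, while you compare directly with $g_{\D(0,R)}(z)=\log^+(|z|/R)$ and the Lelong-class bound on $g_K$; both yield the same uniform estimate with $N=2$.
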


\begin{proof}   Note first that if $(p_n)_{n=1}^\infty$ satisfies any of conditions $(i)$-$(v)$, then the zeros of the polynomials from the sequence are uniformly bounded.

$(i)\Longrightarrow (ii)$ and $(iv) \Longrightarrow (v)$ are obvious.

$(v) \Longrightarrow (ii)$ It suffices to take $h:=ag_K-b-1$.
The condition $\lim_{|z| \to \infty}(h(z)-\alpha \log|z|)=+\infty$ holds, since $g_K \in \mathcal{L}^+$ in view of (\ref{e:L+})  and $a>\alpha$.

$(ii) \Longrightarrow (iii)$    This is    Corollary \ref{cor:przeciwobrazkola}.

$(iii) \Longrightarrow (iv)$ and $(iii)\Longrightarrow (i)$ Fix a   compact   regular  set $K\subset \cj$. Let $R>1$ be as in $(iii)$ and let $\varepsilon >0$ be such that $\overline{\mathbb{D}}(0,R)
\subset K_\varepsilon$. We have $\bigcup_{ n \geq 2}p_n^{-1}\left(\overline{\mathbb{D}}(0,1)\right) \subset \bigcup_{ n \geq 2}p_n^{-1}\left(\D(0,R)\right)
\subset K_\varepsilon$.

 It follows from
 (\ref{e:AsubsetB}),  (\ref{e:Greenaprzeciwobrazu}) and Example \ref{ex: koloGreen} that  $$\forall z\in\cj\ \forall  n\geq 2:\quad g_{K_\varepsilon}(z) \leq g_{p_n^{-1}\left(\D(0,1)\right)}(z) = \frac{1}{\deg p_n} \log^+|p_n(z)|.$$   To get $(iv)$ with $K$ and $a\in(\alpha,1]$ it suffices now to recall that $g_{K_\varepsilon}  = g_K  -\varepsilon$ in $
 \mathbb{C}\setminus K_\varepsilon$ by (\ref{e:Mazurek}) and that $g_K$ is non-negative.  In view of (\ref{e:L+}),  $(i)$ is also satisfied (e.g. with $h=g_{K_\varepsilon}-1$),
 since $\alpha\in[0,\frac12]$.
\end{proof}

 \begin{remark} A property slightly weaker than  Lemma \ref{prop: tnzez} and the implication $(iv) \Longrightarrow   (iii)$ in Theorem \ref{thm: guided} were proved under different assumptions as  \cite[Proposition 3.3]{CHPP}.
\end{remark}

 \subsection{Examples of guided sequences of polynomials} \label{subsection: KW}

   The notion of guided sequences generalizes the following classical one.

\begin{definition}\label{def:KWpolynomials}
  Let  $K \subset \mathbb{C}$ be a regular polynomially convex compact set. Consider a sequence of polynomials $(p_n)_{n=1}^\infty$ such that $$p_n:\cj\ni z \longmapsto \big(z-\zeta_1^{(n)}\big)...\big(z-\zeta_n^{(n)}\big)\in\cj ,\quad n\in\{1,2,...\}.$$
  (Thus all $p_n$ are monic 
  and $\forall n: \  \deg p_n=n.$)
  We say that $(p_n)_{n=1}^\infty$  satisfies  the {\it Kalm\'ar-Walsh condition}, or is a {\it KW sequence} (of polynomials)
   associated with $K$, if  the set $\bigcup_{n \in \mathbb{N}}\left\{\zeta_1^{(n)},...,\zeta_n^{(n)}\right\}$ is bounded and
   \begin{equation}\label{eq: normy}
 \lim\limits_{n\to\infty} \|p_n\|_K^{1/n} = {\rm cap}(K).
 \end{equation}
\end{definition}

When zeros of all $p_n$ do not have accumulation points in $\mathbb{C}\setminus K$, this is equivalent to  $\lim\limits_{n\to\infty} |p_n(z)|^{1/n} = {\rm cap}(K)\cdot \exp(g_K(z))$ uniformly on compact subsets of $\cj\setminus K$
(cf.   \cite[Theorem 1.4]{BBCL}, \cite[Section 7.3, Theorem 3 and Section 7.4, Theorem 4]{Walsh} and  \cite[II.2.B, Theorem 1 and Lemma 1]{Gaier}).   In particular, in this setting a KW sequence of polynomials associated with $K$ is $K$-guided, with $a=1$ and  e.g. $b=-\log{\rm cap}(K)-1$.  For  the case when the zeros of $p_n$ are assumed to be uniformly bounded but are allowed to have limit points in the unbounded component of $\mathbb{C}\setminus K$   see \cite{Leja47}  (cf. also the proof of Proposition \ref{prop:KWfinite}).

Let us also recall the known notion of Chebyshev polynomials.

\begin{definition}\label{def:nthCheb} Let $E \subset \mathbb{C}$ be a compact set. A monic polynomial $t_n$  of degree $n \geq 1$
 is called the  {\it $n$th Chebyshev polynomial} (or the {\it $n$th minimal polynomial}) on $E$ if $\|t_n\|_E \leq \|q\|_E$ for any monic polynomial $q$ of degree $n$.
\end{definition}

  An important example is associated with the interval $[-1,1]$ (cf. Main Example from the Introduction).
\begin{example}\label{ex:Td}
  Let $E=[-1,1]$  and $n\geq 1$. Then $t_n:=\frac{1}{2^{n-1}}T_n$ is the $n$th
  minimal
polynomial on $E$, where   $T_n$ satisfies the  formula $T_n\left(\frac{z+ z^{-1}}{2}\right)=\frac{z^n+z^{-n}}{2}$. The polynomials $T_n$ are called the {\it  classical Chebyshev polynomials}.
\end{example}

 We observe  the following fact:

\begin{proposition}\label{prop:czebyszewKW}
  If $K\subset \cj$ is compact, polynomially convex and regular, then the sequence of Chebyshev polynomials on $K$ is a KW sequence associated with $K$.  \end{proposition}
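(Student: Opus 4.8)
The plan is to verify directly the two defining conditions of a KW sequence in Definition~\ref{def:KWpolynomials}: that the zeros of the $n$th Chebyshev polynomials $t_n$ on $K$ remain in a fixed bounded set, and that $\|t_n\|_K^{1/n}\to {\rm cap}(K)$. (Monicity and $\deg t_n=n$ are built into Definition~\ref{def:nthCheb}.) First I would record that each $t_n$ exists and that $\|t_n\|_K>0$: since $K$ is regular it is infinite, so $\|\cdot\|_K$ is a genuine norm on the space of polynomials of degree $\leq n$, and minimizing this continuous, coercive function over the finite-dimensional affine family of monic polynomials of degree $n$ attains a positive minimum. The minimal polynomial need not be unique, but the two remaining conditions will hold for every choice, so the statement is unambiguous.

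For the bounded zeros I would prove the stronger assertion that every zero of every $t_n$ lies in $\conv K$, which is compact. Suppose $t_n(\zeta_0)=0$ with $\zeta_0\notin\conv K$, and let $w$ be the unique nearest point of the compact convex set $\conv K$ to $\zeta_0$. The nearest-point inequality $\operatorname{Re}\langle z-w,\zeta_0-w\rangle\le 0$ for all $z\in\conv K$ gives $|z-\zeta_0|^2\ge |z-w|^2+|\zeta_0-w|^2>|z-w|^2$ for every $z\in\conv K\supset K$. Replacing the factor $(z-\zeta_0)$ in $t_n$ by $(z-w)$ produces another monic polynomial $\tilde t_n$ of degree $n$ with $|\tilde t_n(z)|<|t_n(z)|$ at every $z\in K$ where $t_n(z)\ne 0$; since $\|t_n\|_K>0$ and $K$ is compact this forces $\|\tilde t_n\|_K<\|t_n\|_K$, contradicting minimality. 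Hence $\bigcup_n t_n^{-1}(0)\subset\conv K$ is bounded, as required.

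For the norm asymptotics the lower bound is immediate from the characterization (\ref{e:greenchar}): applied to the admissible polynomial $t_n/\|t_n\|_K$ it gives $\frac1n\log|t_n(z)|-\frac1n\log\|t_n\|_K\le g_K(z)$ for all $z$, and letting $|z|\to\infty$, where $\frac1n\log|t_n(z)|-\log|z|\to 0$ and $g_K(z)-\log|z|\to-\log{\rm cap}(K)$, yields $\|t_n\|_K^{1/n}\ge {\rm cap}(K)$ for every $n$. The reverse inequality $\limsup_n\|t_n\|_K^{1/n}\le {\rm cap}(K)$ is the substantive part and the main obstacle: it is the classical Fekete--Szeg\H{o} identity between the Chebyshev constant, the transfinite diameter, and the logarithmic capacity. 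I would obtain it by comparing $t_n$ with the Fekete polynomial $\prod_{j=1}^{n}(z-w_j)$ formed from Fekete points of $K$: minimality gives $\|t_n\|_K\le\big\|\prod_{j=1}^n(z-w_j)\big\|_K$, and the standard estimate shows $\big\|\prod_{j=1}^n(z-w_j)\big\|_K^{1/n}$ tends to ${\rm cap}(K)$, since the $n$-point diameters decrease to the transfinite diameter, which equals ${\rm cap}(K)$. Combining the two bounds gives $\lim_n\|t_n\|_K^{1/n}={\rm cap}(K)$. Because the deep content is precisely the transfinite-diameter $=$ capacity identity, in the write-up I would cite it from the potential-theory references rather than reprove it, and devote the new argument to the elementary projection step establishing bounded zeros.
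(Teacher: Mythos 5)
Your proposal is correct and rests on exactly the same two ingredients as the paper's proof: Fej\'er's theorem that the zeros of the minimal polynomials lie in $\conv K$, and Fekete's theorem that $\|t_n\|_K^{1/n}\to{\rm cap}(K)$. The only difference is one of self-containedness — the paper simply cites Fej\'er and Fekete, whereas you supply the nearest-point projection argument for the zeros and derive the lower bound $\|t_n\|_K^{1/n}\geq{\rm cap}(K)$ from (\ref{e:greenchar}), citing only the transfinite-diameter $=$ capacity identity for the upper bound; both versions are sound.
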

\begin{proof}
 Fej\'er showed in \cite{Fejer22} that all zeros of the Chebyshev polynomials on a compact set $K$ lie in the convex hull   ${\rm conv}K$ (see also e.g. \cite{Leja47}).
Moreover, in \cite{Fekete} it was proved that (\ref{eq: normy})  holds for the sequence of minimal polynomials  on a compact set.
\end{proof}

More examples of  sequences $(p_n)_{n=1}^\infty$   such that $\bigcup_{n \in \mathbb{N}}\left\{\zeta_1^{(n)},...,\zeta_n^{(n)}\right\}\subset K$  and   (\ref{eq: normy}) is satisfied can be found in \cite[II.2]{Gaier}.

Now we turn to other
examples. The authors of \cite{BB03}
  introduced a class $\mathcal{B}$
 of sequences $(f_n)_{n=1}^\infty$, where $f_n(z) =
\sum_{j=1}^{d_n}
 a_{n,j}z^j$ and $d_n \geq 2$ for
any $n$. Formally, in \cite[4.1]{BB03}  $\mathcal{B}$ was defined as the class of sequences of polynomials satisfying a variant  of the condition $(iii)$ in our Theorem \ref{thm: guided}. Therefore such sequences are guided. However, in practice, in \cite{BB03} as well as in other works building on it, only sequences of polynomials satisfying some  sufficient condition were considered to be members of $\mathcal{B}$.
In particular, they would satisfy
\begin{enumerate}
  \item[(P2)] there is a constant $A\geq 0$ such that $|a_{n,j}|\leq A|a_{n,d_n}|$ for $j\in\{0,...,d_n\}$ and all integers $n$.
\end{enumerate}
The  necessity of
condition (P2) for membership in $\mathcal{B}$ was not addressed in the existing literature. We would like to point out that (P2) does not hold for certain KW sequences of polynomials.   Therefore the class of guided sequences is larger than the class of sequences satisfying (P2). Here is an explicit example.


\begin{remark}\label{rem:notinB}  The sequence of the Chebyshev polynomials on $[-1, 1]$  does not
  satisfy (P2). 
\end{remark}

\begin{proof}
 Recall that $T_n$ is the classical Chebyshev polynomial of degree $n$ and the $n$th Chebyshev polynomial on $[-1,1]$ is $\frac{1}{2^{n-1}}T_n$ (see Example \ref{ex:Td}). We may write  $T_n(z)=2^{n-1}z^n+a_{n-1}^{(n)}z^{n-1}+...+a_0^{(n)}$. Note that $a_{n-1}^{(n)}=0$ for every $n \geq 1$.

First we will prove that $a_{n-2}^{(n)}=-n2^{n-3}$ for $ n\in\{1,2,...\}$. Indeed, this is true for $T_1(z)=z, \ T_2(z)=2z^2-1, \ T_3(z)=4z^3-3z$. To argue by induction, let $n$ be such that $a_{k-2}^{(k)}=-k2^{k-3}$ for $k\in\{2,3,...,n\}$. From the recurrence formula $T_{n+1}(z)=2zT_n(z)-T_{n-1}(z)$ (valid for all $n \geq 1$, if we set $T_0(z)\equiv 1$) we can express $T_{n+1}(z)$ as
\begin{align*}
2z\left(2^{n-1}z^n+(-n2^{n-3})z^{n-2}+a_{n-3}^{(n)}z^{n-3}+...+a_0^{(n)}\right)+\\-\left(2^{n-2}z^{n-1}+(-(n-1))2^{n-4}z^{n-3}+a_{n-4}^{(n-1)}z^{n-4}+...+a_0^{(n-1)}\right).
\end{align*}
Then $a_{n-1}^{(n+1)}=-n2^{n-2}-2^{n-2}=-(n+1)2^{n-2}$, as claimed.

  Thus for the  polynomial $\frac{1}{2^{n-1}}T_n$ the coefficient corresponding to $z^{n-2}$ is  $-n/4$. Hence    condition (P2) does not hold.
\end{proof}

For the explicit expression of coefficients of $T_n$ corresponding to the powers of the variable $z$  see  e.g. \cite[T1.2 (40)]{Pasz75}.

 Assume now that $(p_n)_{n=1}^\infty$  is guided.   Observe that it need not satisfy the Kalm\'ar-Walsh condition, since  the degrees of polynomials do not have to tend   to infinity.  One could ask whether   the assumption   $\deg p_n \longrightarrow \infty$ as $n \to \infty$ implies that  $(p_n)_{n=1}^\infty$ must be a (subsequence of a)  KW sequence. In the following example, given any compact regular polynomially convex set $K \subset \mathbb{C}$, we will show that there is a guided sequence of polynomials   with  $\deg p_n \longrightarrow \infty$ as $n \to \infty$  which
does not satisfy the Kalm\'ar-Walsh condition for  $K$.

\begin{example} \label{ex: nonKW}   Fix    a compact regular polynomially convex  set $K\subset \mathbb{C}$ and $a \in (0,1)$. We will construct a $K$-guided sequence $(P_n)_{n=1}^\infty $ of polynomials  satisfying estimates (**) from Definition  \ref{def:K-guided} with the chosen $a$ and with  $b=1$ 
 which is not   (a subsequence of) a KW sequence for $K$.

  Let  $L\subset \mathbb{C}$ be any   compact regular polynomially convex set such that $K \cap L=\emptyset$ and $\mathbb{C}\setminus (K \cup L)$ is connected.  Take  $\delta>0$ such that $K \cup L\subset \D(0,\delta)$.   Let $(p_n)_{n=1}^\infty$ and $(q_n)_{n=1}^\infty$ be
 KW sequences of polynomials associated with $K$ and $L$ respectively,
such that all zeros of $p_n$ lie in $K$ and all zeros of $q_n$ lie in $L$.
Let further $k_n,l_n$ be two sequences of integers such that  $0<k_n<l_n$, $n\in\{1,2,...\}$, and $a=\lim_{n_\to \infty}k_n/l_n$.  If $a=k/l\in \mathbb{Q}$ with $k,l$ coprime, simply take the sequences $k_n, l_n$ to be constant, $k_n=k$, $l_n=l$.  If $a \notin \mathbb{Q}$,  take $l_n \longrightarrow \infty$ as $n \to \infty$ (e.g., fixing an approximation of $a$ by decimal fractions).

Let $P_n:=(p_n)^{k_n}(q_n)^{l_n-k_n}$ for $n\in\{1,2,...\}$. Then ${\rm deg} P_n=nl_n$. Furthermore,
\[
\lim_{n \to \infty}\frac{1}{nl_n}\log|P_n(z)|=\lim_{n \to \infty}\frac{k_n}{l_n}\frac{1}{n}\log|p_n(z)|+\lim_{n \to \infty}\frac{l_n-k_n}{l_n}\frac{1}{n}\log|q_n(z)|,
\]
which equals $ag_K(z)+(1-a)g_L(z)$ in  $\cj\setminus \D(0, \delta)$
and the convergence is locally uniform. It follows that
 $  \forall z\in \mathbb{C}\setminus \D(0, \delta) \ \exists U$ -- a neighbourhood of $z$ such that
 \[ \exists N\geq 1\ \forall n\geq N\ \forall \zeta\in U: \quad \frac1{\deg P_n} \log^+|P_n(\zeta)| \geq ag_K(\zeta)-1.
\]
That is, $(P_n)_{n=1}^\infty$ is a $K$-guided sequence of polynomials with the chosen value of $a$  and with 
 $b=1$.

 Suppose $ag_K(z)+(1-a)g_L(z)=g_K(z)$ when $z\in  \mathbb{C}\setminus \D(0, \delta)$.   By the identity principle for harmonic functions, $ag_K(z)+(1-a)g_L(z)=g_K(z)$ when $z\in\mathbb{C}\setminus(K \cup L)=:\Omega$. It follows that  $g_K\equiv g_L$ in $\Omega$.
 Thus the continuity of the Green functions and the connectedness of $\Omega$
 imply that
 $\forall z\in \partial L: \ g_K(z)=0$. This is a contradiction because $K\cap L=\emptyset$.

Note that this argument also shows that  $(P_n)_{n=1}^\infty $ is not  (a subsequence of) a KW sequence for $L$.
\end{example}


\section{Polynomial Julia type sets} \label{s: Julia}

\subsection{Autonomous Julia sets}

\begin{definition}\label{def:filledJulia}
  Let $P:\cj\longrightarrow \cj
  $ be a polynomial of degree $d\geq 2$. The  {\it (autonomous) filled Julia set} of $P$ is
\begin{align*}
\K[P]:=&\ \{z\in\cj:\; (P^n(z))_{n=1}^\infty \text{ is bounded}\}.
\end{align*}
\end{definition}
It is well known that the filled Julia set is non-empty, compact,  totally invariant under $P$ (i.e., $P(\K[P])=P^{-1}(\K[P])=\K[P]$) and
\begin{align}\label{e:infty}
\K[P]:=  \cj\setminus \left\{z\in\cj:\:  \lim_{n\rightarrow \infty} P^n(z)= \infty\right\}.
\end{align}
  For more information, see e.g. \cite{CGbook}.
\begin{example}\label{ex: filled}
$ \forall n\geq 2:\; \K[z\longmapsto z^n]=\D(0,1)\; \text{ and } \; \K[T_n]=[-1,1],$
where $T_n$ is the $n$th classical Chebyshev polynomial for $n\geq 2$.
\end{example}

Let us define the notion of   an  escape radius of a polynomial, following \cite[page 53]{KK18} (a slightly different definition was proposed in \cite{Douady}).

\begin{definition}\label{def:escape} An {\it escape radius} for a polynomial $P:\cj\longrightarrow \cj$   is a number $R>0$ with the following property
$$|z|>R\quad \Longrightarrow \quad  \lim_{n\rightarrow \infty} P^n(z)= \infty.$$
\end{definition}

\begin{lemma}\label{lem:escaperadius}
  $R>0$ is an escape radius for a polynomial $P$ if and only if $
                                                         \K[P]\subset \D(0,R).
                                                      $
In particular, if $R$ is an escape radius for $P$ and $\varrho>R$, then $\varrho$ is an escape radius for $P$ too.
\end{lemma}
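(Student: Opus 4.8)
The plan is to prove both directions of the equivalence and then deduce the monotonicity statement as an immediate consequence.

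\textbf{Forward direction} ($R$ escape radius $\Longrightarrow \K[P]\subset \D(0,R)$). I would argue by contradiction, or rather by the contrapositive in a clean form. Suppose $z$ is a point with $|z|>R$. By the definition of escape radius, $\lim_{n\to\infty}P^n(z)=\infty$, so in particular the orbit $(P^n(z))_{n=1}^\infty$ is unbounded. Hence $z\notin\K[P]$ by Definition \ref{def:filledJulia}. This shows that every point outside $\overline{\mathbb{D}}(0,R)$ lies outside $\K[P]$; equivalently, $\K[P]\subset\overline{\mathbb{D}}(0,R)$. To upgrade the closed disk to the \emph{open} disk $\D(0,R)$ as written, I would note that $\K[P]$ is compact (as recalled after Definition \ref{def:filledJulia}), so if it were not contained in the open disk it would meet the circle $|z|=R$; but points on that circle may escape or not, so a little care is needed. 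The cleanest route is to observe that the escape property in fact forces escape for all $|z|\ge R'$ for some $R'<R$ using Lemma \ref{lem: theta} type growth, but since the statement as phrased uses $\D(0,R)$, I would simply invoke that an escape radius $R$ also serves as an escape radius for all slightly smaller radii in the relevant regime, or work with the characterization \eqref{e:infty} directly.

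\textbf{Reverse direction} ($\K[P]\subset\D(0,R)\Longrightarrow R$ escape radius). This is the direction where the real dynamical content sits. Given $|z|>R$, I must show $P^n(z)\to\infty$. Since $z\notin\D(0,R)\supset\K[P]$, we have $z\notin\K[P]$, which by \eqref{e:infty} means precisely that $\lim_{n\to\infty}P^n(z)=\infty$. That is literally the escape condition, so this direction is immediate from the characterization \eqref{e:infty} of the filled Julia set. The only subtlety is matching the strict inequality $|z|>R$ in Definition \ref{def:escape} against the containment $\K[P]\subset\D(0,R)$: if $|z|>R$ then $z\notin\overline{\mathbb{D}}(0,R)\supset\D(0,R)\supset\K[P]$ (one must check $z$ is outside the \emph{closed} disk, which follows from $|z|>R$), hence $z\notin\K[P]$ and escape follows.

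\textbf{The ``in particular'' clause.} Once the equivalence is established, the monotonicity is trivial: if $R$ is an escape radius then $\K[P]\subset\D(0,R)$, and for $\varrho>R$ we have $\D(0,R)\subset\D(0,\varrho)$, so $\K[P]\subset\D(0,\varrho)$, whence $\varrho$ is an escape radius by the equivalence just proved. I expect the main obstacle to be purely bookkeeping: reconciling the strict-versus-nonstrict inequalities between the open disk $\D(0,R)$ in the statement and the strict condition $|z|>R$ in Definition \ref{def:escape}, together with the use of compactness of $\K[P]$ to handle the boundary circle. The genuine dynamical facts (that unbounded orbit means escape to infinity, and the identity \eqref{e:infty}) are quoted as known, so no iteration estimates need to be carried out.
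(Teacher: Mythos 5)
Your proof is correct and follows essentially the same route as the paper's, which simply combines the definition of escape radius with the characterization \eqref{e:infty} of $\K[P]$ as the complement of the escaping set. One point of notation dissolves your only worry: in this paper $\D$ is defined as $\overline{\mathbb{D}}$, so $\D(0,R)$ is the \emph{closed} disk; there is nothing to ``upgrade'' to an open disk, and the forward direction is exactly your first observation, $|z|>R\Rightarrow P^n(z)\to\infty\Rightarrow z\notin\K[P]$, i.e.\ $\K[P]\subset\{|z|\le R\}=\D(0,R)$. It is worth noting that if the statement really involved the open disk it would be false (for $P(z)=z^2$ the radius $R=1$ is an escape radius, yet $\K[P]=\overline{\mathbb{D}}(0,1)$ meets the unit circle), and your proposed repair via slightly smaller escape radii would fail in that same example; fortunately the question never arises.
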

\begin{proof}
  This follows from the definition and from (\ref{e:infty}).
\end{proof}

\begin{cor}\label{cor: intersecting} If $R>0$ is an escape radius for a polynomial $P$, then
\[
\K[P]=\bigcap_{n=1}^{\infty}P^{-n}\left(\D(0,R)\right).
\]   \end{cor}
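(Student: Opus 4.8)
The plan is to prove the asserted equality by establishing the two inclusions separately. I would first record that, by the notational convention $\D(0,R)=\overline{\mathbb{D}}(0,R)$, one has $P^{-n}(\D(0,R))=\{z\in\cj:\ |P^n(z)|\le R\}$, so that
\[
\bigcap_{n=1}^{\infty}P^{-n}\left(\D(0,R)\right)=\{z\in\cj:\ \forall n\ge 1,\ |P^n(z)|\le R\}.
\]
This reformulation makes both inclusions transparent.

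For the inclusion $\bigcap_{n=1}^{\infty}P^{-n}(\D(0,R))\subseteq\K[P]$, I would argue directly from Definition \ref{def:filledJulia}: if $z$ lies in the intersection, then $|P^n(z)|\le R$ for every $n\ge 1$, so the orbit $(P^n(z))_{n=1}^\infty$ is bounded (by $R$) and hence $z\in\K[P]$. This direction uses no property of $R$ beyond its finiteness.

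For the reverse inclusion $\K[P]\subseteq\bigcap_{n=1}^{\infty}P^{-n}(\D(0,R))$, I would invoke the escape radius hypothesis. Suppose $z\in\K[P]$ but, aiming at a contradiction, $|P^{n_0}(z)|>R$ for some $n_0\ge 1$. Setting $w:=P^{n_0}(z)$, we have $|w|>R$, so Definition \ref{def:escape} yields $\lim_{k\to\infty}P^k(w)=\lim_{k\to\infty}P^{k+n_0}(z)=\infty$, contradicting the boundedness of $(P^n(z))_{n=1}^\infty$ guaranteed by $z\in\K[P]$. Therefore $|P^n(z)|\le R$ for all $n\ge 1$, i.e. $z$ belongs to the intersection. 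Alternatively, this inclusion follows immediately from Lemma \ref{lem:escaperadius} together with the forward invariance $P(\K[P])=\K[P]$ recorded after Definition \ref{def:filledJulia}: then $P^n(z)\in\K[P]\subset\D(0,R)$ for all $n$.

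The argument is entirely elementary, and I expect no genuine obstacle. The only step that uses the hypothesis on $R$ is the reverse inclusion, where the defining property of an escape radius is precisely what prevents a bounded orbit from ever leaving $\D(0,R)$; everything else is a direct unwinding of the definition of $\K[P]$.
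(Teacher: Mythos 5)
Your proof is correct and follows essentially the same route as the paper: the reverse inclusion is the same direct boundedness argument, and your "alternative" for the forward inclusion (invariance of $\K[P]$ plus Lemma \ref{lem:escaperadius}) is precisely what the paper does, with your contradiction argument merely unwinding that lemma. No issues.
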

\begin{proof} Let $R>0$ be such that $\K[P]\subset \D(0,R)$.
In consequence  $\mathcal{K}[P]=P^{-n}(\mathcal{K}[P]) \subset P^{-n}(\overline{\mathbb{D}}(0,R))$ for each $n \geq 1$.
 Conversely, if $\forall n\geq 1:\ z \in P^{-n}\left(\D(0,R)\right)$, then $(P^n(z))_{n=1}^{\infty} \subset \D(0,R)$, hence $z \in \K[P]$.
\end{proof}

A sufficient condition for a positive number to be an escape radius for a polynomial  was given in Lemma \ref{lem: theta}.

\subsection{ Non-autonomous Julia sets}

We will consider now a generalization of the autonomous 
 Julia set.   Even though usually only polynomials of degree greater than 1 are considered (cf. e.g. \cite{B97}, \cite{KK18}), we will allow one polynomial to be of degree 1. This in particular takes into account  KW sequences
of polynomials and enables their study.

\begin{definition}\label{def:nonautonomous}
   Let $(p_n)_{n=1}^\infty$ be a sequence of polynomials such that $\deg p_1\geq 1$ and $\deg p_n\geq 2$ for $n\geq 2$.
  We define the {\it (non-autonomous) filled Julia set} of the sequence $(p_n)_{n=1}^\infty$ to be
  \begin{align*}
\K[(p_n)_{n=1}^\infty]:=&\ \left\{z\in\cj:\; \left((p_n\circ\dots\circ p_1)(z)\right)_{n=1}^\infty \text{ is bounded}\right\}.
\end{align*}
\end{definition}

\begin{remark}\label{rem:pierwszy}
  It is straightforward that in the situation from the definition $$\K[(p_n)_{n=1}^\infty]=p_1^{-1}\left(\K[(p_n)_{n=2}^\infty]\right).$$
  In particular if $p_1:\cj\ni z \longmapsto z\in\cj$, then
$\K[(p_n)_{n=1}^\infty]=\K[(p_n)_{n=2}^\infty]$.
\end{remark}

Note that it follows from   Definition \ref{def:nonautonomous} that
$$\K[(p_n)_{n=1}^\infty]=\bigcup_{r\in \nj}\bigcap_{n\geq 1}(p_n\circ...\circ p_1)^{-1}\left(\D(0,r)\right),$$
hence  $\K[(p_n)_{n=1}^\infty]$ is of $F_\sigma$-type.

 A non-autonomous filled Julia set may be finite, which is impossible for an autonomous one.
  \begin{example} \label{ex:juliafinite}
 As shown in \cite{B97}, if we take $\forall n\geq 1:\ p_n:\cj\ni z \longmapsto n^{2^n}z^2\in\cj$, then $\K[(p_n)_{n=1}^\infty]=\{0\}$. Similarly, if we
  take $q_1:\cj\ni z\longmapsto z^2-1\in\cj$ and $\forall n\geq 2: q_n:=p_n$, then $\K[(q_n)_{n=1}^\infty]=\{-1,1\}$. Both  sequences are guided.\end{example}

If $(p_n)_{n=1}^\infty$ in Definition \ref{def:nonautonomous} above is periodic (i.e., there exists an $m$ such that $p_{m+i}=p_i$ for every $i$),  then we obtain the autonomous filled Julia set (see Definition \ref{def:filledJulia})
of the polynomial $p_m\circ....\circ p_1$.
In particular for a constant sequence we also get   the  
autonomous Julia set. The interesting case is when the sequence is not periodic, e.g., when each polynomial in the sequence has a different degree.

\begin{example}\label{ex:circ} If $(d_n)_{n=1}^\infty$ is a sequence of integers not smaller than 2 and $p_n:\cj\ni z \longmapsto z^{d_n}$, then $\K[(p_n)_{n=1}^\infty]=\D(0,1)$. Furthermore $\K[(T_{d_n})_{n=1}^\infty]=[-1,1]$. This follows from Example \ref{ex: filled} and the  composition formulae $p_n\circ p_k=p_{nk}$ and $T_n\circ T_k=T_{nk}$.  In particular $\K[(z\longmapsto z^n)_{n=1}^\infty]=\D(0,1)$ and $\K[(T_n)_{n=1}^\infty]=[-1,1]$.
\end{example}

Let us note a consequence of
Example \ref{ex:circ} and Remark \ref{rem:pierwszy}.

\begin{example}
  For any non-constant polynomial $P:\cj\longrightarrow \cj$, the sets $P^{-1}\left(\D(0,1\right))$ and $P^{-1}\left([-1,1]\right)$ are (non-autonomous) filled Julia sets. \end{example}

The following theorem is our first result about the non-autonomous Julia set defined with  the  use of a  guided sequence of polynomials.

\begin{theorem}\label{thm:JuliadlaKW}
   Let  $(p_n)_{n=1}^\infty$ be a  guided sequence of  polynomials.
   Then $ \K[(p_{n})_{n=1}^\infty]$ is   non-empty and  compact.
\end{theorem}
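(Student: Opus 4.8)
The plan is to realize $\K[(p_n)_{n=1}^\infty]$ as a nested intersection of non-empty compact sets. Write $q_n := p_n \circ \dots \circ p_1$, and fix a radius $R > 0$ large enough that both of the tools at our disposal apply simultaneously: the exponential-expansion estimate of Lemma \ref{prop: tnzez} (so that $|z| \geq R$ and $n \geq 2$ force $|p_n(z)| \geq e|z|$) and the self-mapping property of Corollary \ref{cor:przeciwobrazkola} (so that $p_n^{-1}(\D(0,R)) \subset \D(0,R)$ for $n \geq 2$). Since a threshold valid for Lemma \ref{prop: tnzez} remains valid for any larger radius, and Corollary \ref{cor:przeciwobrazkola} holds for all $R$ beyond some $\varrho$, it suffices to take $R$ to be the maximum of the two radii these results provide. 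Set $K_n := q_n^{-1}(\D(0,R))$.

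First I would record that each $K_n$ is non-empty and compact: non-emptiness is immediate because $q_n$ is a non-constant polynomial, hence surjective, so $q_n^{-1}(\{0\}) \neq \emptyset$; compactness follows because $|q_n(z)| \to \infty$ as $|z| \to \infty$, so the preimage of a closed disk is closed and bounded. Next I would prove the nesting $K_{n+1} \subset K_n$. Writing $q_{n+1} = p_{n+1} \circ q_n$ gives $K_{n+1} = q_n^{-1}\bigl(p_{n+1}^{-1}(\D(0,R))\bigr)$; since $n+1 \geq 2$, Corollary \ref{cor:przeciwobrazkola} yields $p_{n+1}^{-1}(\D(0,R)) \subset \D(0,R)$, and applying $q_n^{-1}$ to both sides gives $K_{n+1} \subset K_n$. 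By Cantor's intersection theorem, $\bigcap_n K_n$ is then a non-empty compact set, and any $z$ in it satisfies $q_n(z) \in \D(0,R)$ for every $n$, so its orbit is bounded and $z \in \K[(p_n)_{n=1}^\infty]$. This already settles non-emptiness.

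The substantive step, and the one I expect to carry the real content, is the reverse inclusion $\K[(p_n)_{n=1}^\infty] \subset \bigcap_n K_n$, which is what upgrades the filled Julia set from its a priori $F_\sigma$ description to a closed (indeed compact) set. Here I would use the escape estimate: suppose $z \in \K[(p_n)_{n=1}^\infty]$ but $|q_N(z)| > R$ for some $N \geq 1$. Every subsequent index $N+1, N+2, \dots$ is at least $2$, so Lemma \ref{prop: tnzez} applies at each step; an easy induction gives $|q_{N+k}(z)| \geq e^{k}\,|q_N(z)|$, the point being that each application keeps the modulus above $R$ because $e > 1$. Hence $|q_{N+k}(z)| \to \infty$, contradicting boundedness of $(q_n(z))$. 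Therefore $|q_n(z)| \leq R$ for all $n$, i.e. $z \in \bigcap_n K_n$. Combining the two inclusions yields $\K[(p_n)_{n=1}^\infty] = \bigcap_n K_n$, which is non-empty and compact.

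I do not expect a deep obstacle here; the care lies in the bookkeeping. The two facts must be activated by a single radius $R$, which is why one takes the maximum of the radii from Lemma \ref{prop: tnzez} and Corollary \ref{cor:przeciwobrazkola}. Equally, one must notice that the escape argument only ever invokes the growth property for indices $\geq 2$: after any escape at step $N \geq 1$ the very next map used is $p_{N+1}$ with $N+1 \geq 2$, so the single permitted degree-one polynomial $p_1$ never needs to satisfy the expansion estimate and causes no trouble.
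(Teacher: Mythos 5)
Your proof is correct and follows essentially the same route as the paper: both identify $\K[(p_n)_{n=1}^\infty]$ with the intersection $\bigcap_n (p_n\circ\dots)^{-1}\left(\D(0,R)\right)$ for an $R$ furnished by Lemma \ref{prop: tnzez}, the expansion estimate ruling out bounded orbits that ever leave $\D(0,R)$. The only cosmetic difference is that the paper first treats the tail $(p_n)_{n=2}^\infty$ and then pulls back by $p_1^{-1}$ via Remark \ref{rem:pierwszy}, whereas you keep $p_1$ in the composition throughout and spell out the nesting (via Corollary \ref{cor:przeciwobrazkola}) and the Cantor intersection step that the paper leaves implicit.
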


\begin{proof}
  Let $R$ be as in Lemma \ref{prop: tnzez}. We have  $$|z|\geq R\quad \Longrightarrow \quad  |(p_{n}\circ...\circ p_{2})(z)|\geq e^{n-1}|z|\longrightarrow \infty, \text{ if } n\rightarrow \infty.$$  Therefore $\K[(p_{n})_{n=2}^\infty]=\bigcap_{n=2}^{\infty}(p_{n}\circ...\circ p_{2})^{-1}\left( \D(0,R)\right)$.
It follows that $ \K[(p_{n})_{n=1}^\infty]$ is non-empty and compact by Remark \ref{rem:pierwszy}.
\end{proof}

  Recall that the  Julia set  obtained in this theorem may be finite, as shown in Example \ref{ex:juliafinite}. Our aim is to find regular sets,  therefore we need an additional assumption.
Remark \ref{rem:notinB} shows that the approach from \cite{BB03}, where bounds on coefficients were assumed,    may be insufficient  in our study.
Another   course of action
was proposed in \cite{KK18},
where it was required  that all polynomials from the sequence $(p_n)_{n=1}^\infty$ have  a common escape radius $R$ such that
\begin{equation}\label{e:suppn}\sup_{n}\|p_n\|_{\D(0,R)}<\infty.\end{equation}
In this case $\K[(p_n)_{n=1}^\infty]$ is  regular.
By Lemma  \ref{prop: tnzez}, polynomials in a guided sequence have a common escape radius.  But  the condition (\ref{e:suppn}) does not have to be satisfied,  which can be seen in the following example.

 \begin{example}\label{ex:bounded} Consider  $p_1:\cj\ni z \longmapsto z^2-2\in\cj$ and $p_n:\cj\ni z\longmapsto z^n\in\cj$ for $n\geq 2$. The sequence $(p_n)_{n=1}^\infty$ is obviously guided. Note that
$\K[p_1]=[-2,2]$, therefore in view of Lemma \ref{lem:escaperadius} the smallest escape radius for $p_1$ is 2. Therefore  $R $ is a common escape radius for   $(p_n)_{n=1}^\infty$ if and only if $R\geq 2$. However, $\sup_{n\geq 1}\|p_n\|_{\D(0,2)}\geq \sup_n2^n=+\infty$. On the other hand
$$\sup_{n\geq 1}\frac1{\deg p_n}\log^+|p_n(z)|= \max\left\{\frac12\log^+|z^2-2|, \log^+|z|\right\}<+\infty$$ on any bounded subset of $\cj$.\end{example}

We are interested in  a less restrictive sufficient  condition
for the non-autonomous Julia set defined by a guided sequence of polynomials $(p_n)_{n=1}^\infty$ to be
regular.
 It turns out that the right condition to impose~~is
   \begin{equation}\label{e:finite}  \sup_{n \geq 1} \frac{1}{\deg  p_n} \log^+|p_n(z)|<+\infty \text{ on an open set } D \subset \mathbb{C}.  \end{equation}
 Obviously (\ref{e:suppn}) implies (\ref{e:finite})  and Example \ref{ex:bounded} shows
  that the opposite implication is not true. Therefore the approach from \cite{KK18} is also insufficient for our study.  However, we will use  some tools and results appearing there.

  Before we go further let us comment more on condition (\ref{e:finite}).
 Recall that a guided sequence does not need to satisfy it,
 as shown in Example \ref{ex:guidedinfty}. On the other hand we have the following  results.
 The first one confirms that the approach from \cite{BB03} is insufficient for our case.

  \begin{proposition}\label{prop:KWfinite}
    Every KW sequence of polynomials satisfies $(\ref{e:finite})$.
  \end{proposition}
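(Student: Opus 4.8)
The plan is to obtain (\ref{e:finite}) directly from the Bernstein--Walsh inequality, that is, from the upper-bound half of the characterization (\ref{e:greenchar}); in particular no information about the location of the zeros $\zeta_i^{(n)}$ will be needed. Fix the regular, polynomially convex compact set $K$ with which the KW sequence $(p_n)_{n=1}^\infty$ is associated. Since $K$ is regular it is infinite, so $\|p_n\|_K>0$ for every $n$; the normalized polynomial $p_n/\|p_n\|_K$ is then non-constant, of degree $n$, and has sup-norm $1$ on $K$. Applying the second equality in (\ref{e:greenchar}) to it gives, for every $z\in\cj$ and every $n\ge1$,
\[
\frac{1}{n}\log|p_n(z)|\le g_K(z)+\frac{1}{n}\log\|p_n\|_K .
\]

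First I would control the error term on the right. By the Kalm\'ar--Walsh condition (\ref{eq: normy}) one has $\frac{1}{n}\log\|p_n\|_K\to\log{\rm cap}(K)$, a finite limit since regularity forces ${\rm cap}(K)>0$; hence $M:=\sup_{n\ge1}\frac{1}{n}\log\|p_n\|_K<+\infty$. Substituting this bound into the previous inequality and passing to positive parts yields
\[
\sup_{n\ge1}\frac{1}{n}\log^+|p_n(z)|\le\max\{0,\,g_K(z)+M\}\qquad\text{for all }z\in\cj .
\]

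To conclude, recall that for the regular set $K$ the Green function $g_K$ is continuous on $\cj$ (Definitions \ref{defin: Greenfunction} and \ref{def:g_K}), hence bounded on every compact set. Thus the right-hand side above is finite at every point and locally bounded, and choosing $D$ to be any bounded open set, for instance the open disk $\mathbb{D}(0,1)$, establishes (\ref{e:finite}).

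I do not expect a genuine obstacle in this argument; the only point requiring care is to notice that one needs merely the upper estimate (\ref{e:greenchar}) and not the full asymptotic $\frac1n\log|p_n|\to g_K+\log{\rm cap}(K)$. The lower half of that asymptotic is the delicate part, precisely the situation treated via Leja's results when the zeros accumulate in the unbounded component of $\cj\setminus K$, but it plays no role here: the boundedness of $\frac1n\log\|p_n\|_K$ that drives the estimate is furnished for free by the normalization (\ref{eq: normy}).
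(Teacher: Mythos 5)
Your proof is correct, and it takes a genuinely different route from the paper's. The paper invokes a result of Leja (adapted from the case of Chebyshev polynomials) to get \emph{locally uniform convergence} of $\frac1n\log|p_n|$ to $g_K+\log{\rm cap}(K)$ on $\mathbb{C}\setminus\overline{\mathbb{D}}(0,r)$, where $r$ bounds the zeros, and then reads off (\ref{e:finite}) on an open set outside that disk. You instead use only the upper-bound half of the Bernstein--Walsh characterization (\ref{e:greenchar}): normalizing $p_n$ by $\|p_n\|_K>0$ (legitimate, since a regular $K$ is infinite) gives $\frac1n\log|p_n(z)|\le g_K(z)+\frac1n\log\|p_n\|_K$, and the Kalm\'ar--Walsh condition (\ref{eq: normy}) together with ${\rm cap}(K)=e^{-\gamma}>0$ makes the error term uniformly bounded, so $\sup_n\frac1n\log^+|p_n(z)|\le\max\{0,g_K(z)+M\}$ everywhere. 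Your argument is more elementary and self-contained (it uses only material already quoted in the paper, needs no information about the location of the zeros, and yields the bound on all of $\mathbb{C}$ rather than just outside $\overline{\mathbb{D}}(0,r)$); what the paper's route buys in exchange is the stronger two-sided asymptotic $\frac1n\log|p_n|\to g_K+\log{\rm cap}(K)$, which is the genuinely delicate part and is used elsewhere (e.g.\ in the discussion following Definition \ref{def:KWpolynomials}), but is not needed for this proposition. Your closing remark correctly identifies exactly this dividing line.
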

\begin{proof}
Assume that $K\subset \cj$ is compact, regular and polynomially convex and $(p_n)_{n=1}^\infty$ is a KW sequence
(see Definition \ref{def:KWpolynomials}) associated with $K$ with all zeros in $\D(0,r)$ for some $r>0$. We recall a result from \cite{Leja47}. It was  stated and proved  there for Chebyshev (minimal) polynomials on a compact set $K$, but it can be easily generalized to our setting. Following the lines of the proof of that result, one can  namely show 
that (\ref{eq: normy}) implies locally uniform convergence on $\cj\setminus \D(0,r)$ of the sequence $(\frac1n\log|p_n|)_{n=1}^\infty$ to $g_K+\log {\rm cap}(K)$. This  combined with the continuity of $g_K$ implies (\ref{e:finite}) for a suitable open set outside of $\D(0,r)$.
\end{proof}

 The second result is the promised relation for $K$-guided sequences.

\begin{proposition}\label{prop: jedynka}
Let $K\subset \cj$ be compact and regular and $(p_n)_{n=1}^\infty$ be  $K$-guided  with $a>0$ and $b\in\rj$. If $(p_n)_{n=1}^\infty$ satisfies $(\ref{e:finite})$,  then $a\leq 1$.
\end{proposition}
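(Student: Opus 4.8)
The plan is to argue by contradiction. Suppose $a>1$. The intuition is clear: the $K$-guided condition forces $\frac{1}{\deg p_n}\log^+|p_n|$ to grow at least like $ag_K$, and since $g_K\in\mathcal{L}^+$ grows like $\log^+|z|$ at infinity, $a>1$ would force the family $\left(\frac{1}{\deg p_n}\log^+|p_n|\right)_n$ to grow strictly faster than $\log^+|z|$, violating the Lelong-class growth bound that condition $(\ref{e:finite})$ is secretly encoding.

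First I would set up the functions $v_n(z):=\frac{1}{\deg p_n}\log^+|p_n(z)|$. Each $v_n$ is non-negative, subharmonic, continuous, and belongs to the Lelong class $\mathcal{L}$ (since $\frac{1}{\deg p_n}\log^+|p_n|\leq C_n+\log^+|z|$ with $C_n$ controlled by the coefficients and the bounded zeros). Then I would invoke Proposition \ref{prop: boundedabove}: condition $(\ref{e:finite})$ says $u:=\sup_n v_n$ is finite on a non-empty open set $D$, so by the equivalence $(i)\Leftrightarrow(iii)$ the upper regularization $u^*$ lies in $\mathcal{L}$. Hence there is a constant $C$ with $u^*(z)\leq C+\log^+|z|$, and a fortiori $v_n(z)\leq C+\log^+|z|$ for every $n$ and every $z$.

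Next I would bring in the $K$-guided lower bound. By Definition \ref{def:K-guided}, for $z$ outside $\D(0,\delta)$ we have, for all large $n$ (on a neighbourhood of each such $z$), the estimate $v_n(z)\geq a\,g_K(z)-b$. Combining this with the upper bound from the previous step gives
\begin{equation*}
a\,g_K(z)-b\leq C+\log^+|z|\qquad\text{for all }z\in\cj\setminus\D(0,\delta),
\end{equation*}
at least along the subsequence of indices for which the lower estimate is active, which suffices since the right-hand side is $n$-independent. Since $g_K\in\mathcal{L}^+$ by $(\ref{e:L+})$, there is a constant $c$ with $g_K(z)\geq c+\log^+|z|$ everywhere, so the inequality above yields $a\bigl(c+\log^+|z|\bigr)-b\leq C+\log^+|z|$, i.e.\ $(a-1)\log^+|z|\leq C-ac+b$ for all large $|z|$. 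If $a>1$ the left-hand side tends to $+\infty$ as $|z|\to\infty$ while the right-hand side is a fixed constant, a contradiction. Therefore $a\leq 1$.

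The main obstacle is the bookkeeping in passing from the pointwise, eventually-in-$n$ lower bound of the $K$-guided condition to a uniform bound valid along a single sequence of points going to infinity, so that it can be confronted with the $n$-free upper bound $C+\log^+|z|$. Since the upper bound holds for every individual $n$, I only need, for each large $z$, one index $n$ at which the $a g_K-b$ estimate holds, which the definition supplies; so the apparent difficulty dissolves once the quantifiers are untangled. A minor point to check carefully is that each $v_n$ genuinely lies in $\mathcal{L}$ (so that Proposition \ref{prop: boundedabove} applies), which follows because the zeros of the $p_n$ are uniformly bounded by $\D(0,r)$ and the leading behaviour gives the required $\log^+|z|$ growth.
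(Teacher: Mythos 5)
Your argument is correct and is essentially the paper's own proof: both define $u=\sup_n \frac{1}{\deg p_n}\log^+|p_n|$, use Proposition \ref{prop: boundedabove} together with condition (\ref{e:finite}) to place $u^*$ in $\mathcal{L}$, and then play the resulting upper bound $C+\log^+|z|$ against the lower bound $ag_K-b\geq a(c+\log^+|z|)-b$ coming from the $K$-guided condition and $g_K\in\mathcal{L}^+$, forcing $a\leq 1$. Your handling of the quantifiers in (**) (needing only one admissible index $n$ per point $z$) matches what the paper does implicitly, so nothing further is required.
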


\begin{proof}
 Define $u:=\sup_{n \geq 1}\frac{1}{{\rm deg} \ p_n}\log^+|p_n|$ and take $\delta$  from Definition \ref{def:K-guided}.  In view of (\ref{e:finite})
 Proposition \ref{prop: u*inL} yields  $u^* \in \mathcal{L}$.
Recall that $g_K \in \mathcal{L}^+$   by (\ref{e:L+}). Thus there exist $R>\delta$, \ $C_1>0, \  C_2 >0$ such that
\[
u^*(z) < C_1+\log|z| \quad \text{ and } \quad C_2 + a\log|z| < ag_K(z)
\]
when $|z|>R$. But $u \leq u^*$ and $(p_n)_{n=1}^\infty$ is $K$-guided, hence
\[
|z|>R \quad \Longrightarrow \quad C_2 + a\log|z| -b  < ag_K(z)-b <  C_1 +\log|z|.
\]

If $a> 1$, take $R':=\max\left\{ R, \exp \frac{C_1+b-C_2}{a-1}\right\}+1$ and let $|z|>R'$.  Then $(a-1)\log|z| < C_1+b-C_2$, which implies   $|z|<\exp \frac{C_1+b-C_2}{a-1}< R'$,  a contradiction.
\end{proof}


\subsection{ Klimek's metric}\label{s:Klimek}

We use the following notation
\begin{align*}
  \R&={\R}(\cj)=\\&:=
  \{K\subset \cj:\;  K \text{ is   compact,}
  \text{  regular and polynomially convex}\}.
\end{align*}
 For $E, F\in {\R}\ $ Klimek defined in \cite{Kl95} their distance
\begin{equation}\label{e:Gamma}
  \Gamma
(E,F):=\sup_{z\in\cj}|g_E(z)-g_F(z)|=\max\left(\sup_{z\in E} g_F(z) , \sup_{z\in F} g_E(z) \right)
\end{equation}
and showed that $({\R},\Gamma)$ is a complete metric space. Note that a sequence $(E_n)_{n=1}^\infty$ is convergent to $F$ in $({\R},\Gamma)$ if and only if $g_{E_n}\rightrightarrows g_F$, i.e. the function sequence $(g_{E_n})_{n=1}^\infty$ is uniformly convergent to $g_F$  in the whole complex plane.

 Fix now a polynomial $P$ of degree $d\geq 1$ and consider the
 mapping
\begin{equation}\label{e:A_P}
  A_P: \R\ni K\longmapsto P^{-1}(K)\in \R.
\end{equation}
(\ref{e:Greenaprzeciwobrazu}) yields that $A_P$
is  an isometry if $d=1$ (since $P$ is bijective) and a  contraction with contraction ratio $1/d$ if $d\geq 2$.  In the latter case,  since $(\R,\Gamma)$ is a  complete metric space, by Banach Contraction Principle
$A_P$ has a unique fixed point. This fixed point is the above defined (Definition \ref{def:filledJulia})  filled Julia set $ \K[P]$ (see \cite{Kl95}). In particular $\K[P]\in \R$. Moreover, by the classical proof of Banach Contraction~~Principle
$$
\forall E\in\R:\quad  \lim_{n\rightarrow \infty} P^{-n}(E)=\lim_{n\rightarrow \infty}(A_P)^n(E)= \K[P].
$$
Once again using (\ref{e:Greenaprzeciwobrazu}) we deduce that
$
\forall E\in\R:\; \frac1{d^n}g_E\circ P^n \rightrightarrows g_{\K[P]}.
$

Some properties of the filled Julia set of a polynomial of degree at least 2 (see Definition \ref{def:filledJulia}) followed from the Banach Contraction Principle. We will use a generalization of this result.

\begin{theorem}[Enhanced version of Banach's Contraction Principle, {{\cite[Lemma 4.5]{KK03}}}]\label{th: EnhancedBCP}
  Let $(X, \rho)$ be a
complete metric space and let $(H_n)_{n=1}^\infty$ be a sequence of contractions of X with contraction
ratios not greater than $L<1$. If \;
$
\forall x\in X:\; \sup_{n\geq 1}\rho(H_n(x),x)<\infty,
$
then there exists a unique point $c\in X$ such that the sequence $(H_1\circ...\circ H_n)_{n=1}^\infty$ converges pointwise to $c$.
\end{theorem}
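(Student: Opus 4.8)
The plan is to analyze the partial compositions $S_n := H_1 \circ \dots \circ H_n$ directly. Since each $H_j$ has contraction ratio at most $L$, the composition $S_n$ is itself a contraction with ratio at most $L^n$. The crucial algebraic observation is that these compositions are nested from the inside: $S_{n+1} = S_n \circ H_{n+1}$, so that $S_{n+1}(x) = S_n\bigl(H_{n+1}(x)\bigr)$ for every $x \in X$.

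First I would fix a single base point $x_0 \in X$ and set $M := \sup_{n \ge 1}\rho(H_n(x_0), x_0)$, which is finite by hypothesis. Combining the observation above with the contraction ratio $L^n$ of $S_n$, I estimate consecutive terms:
$$\rho\bigl(S_{n+1}(x_0), S_n(x_0)\bigr) = \rho\bigl(S_n(H_{n+1}(x_0)), S_n(x_0)\bigr) \le L^n\, \rho\bigl(H_{n+1}(x_0), x_0\bigr) \le L^n M.$$
Since $\sum_{n \ge 1} L^n M = \frac{LM}{1-L} < \infty$, the consecutive distances are summable, whence $(S_n(x_0))_{n=1}^\infty$ is a Cauchy sequence; by completeness of $(X,\rho)$ it converges to some point $c \in X$.

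It remains to show that this limit does not depend on the argument. For arbitrary $y \in X$, the contraction estimate for $S_n$ gives $\rho(S_n(y), S_n(x_0)) \le L^n \rho(y, x_0) \to 0$, so by the triangle inequality $\rho(S_n(y), c) \le L^n \rho(y, x_0) + \rho(S_n(x_0), c) \to 0$. Hence $S_n(y) \to c$ for every $y$, that is, the sequence $(S_n)_{n=1}^\infty$ converges pointwise to the constant $c$. Uniqueness of $c$ is immediate, since limits in a metric space are unique.

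The argument is short, and the only real subtlety — the step deserving the most care — is exploiting the nesting $S_{n+1} = S_n \circ H_{n+1}$ so that the telescoping estimate invokes the boundedness hypothesis only at the fixed point $x_0$, rather than at the moving points $S_n(x_0)$. The opposite composition order $H_n \circ \dots \circ H_1$ would instead force one to control $\rho(H_{n+1}(S_n(x_0)), S_n(x_0))$ along an a priori unbounded orbit, and the estimate would not close as cleanly. I note also that, thanks to the independence-of-argument step, the hypothesis is in fact only needed at a single point.
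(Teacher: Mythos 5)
Your proof is correct, and it is essentially the standard argument for this lemma (the paper itself gives no proof, citing only \cite[Lemma 4.5]{KK03}, whose proof proceeds in the same way): exploit the inner-composition ordering $S_{n+1}=S_n\circ H_{n+1}$ together with the ratio $L^n$ of $S_n$ to get a summable telescoping bound, then use the same contraction estimate to show the limit is independent of the base point. Your closing remark correctly identifies why this composition order is the one for which the hypothesis at a single point suffices.
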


Let us also quote the following result.

\begin{proposition}[{{\cite[Proposition 1]{Kl01}}}]\label{p:nagoya}
Let $P_n:\cj\longrightarrow \cj$ be a  polynomial of degree $d_n\geq 2$ for $n\in\{1,2,...\}$. Let $E\in\R$ and define $E_n:=(P_n\circ...\circ P_1)^{-1}(E)$ for $n\in\{1,2,...\}$. If
\begin{equation}\label{e:sum}
  \sum_{n=1}^\infty \frac{\Gamma(P_{n+1}^{-1}(E), E)}{d_1d_2\cdots d_n}<\infty,
\end{equation}
then the sequence $(E_n)_{n=1}^\infty$ is convergent in $(\R,\Gamma)$ to a set $F$. Any other choice of $\widetilde{E}\in \R$ for which $(\ref{e:sum})$ is satisfied, results in the same limit $F$.   If we assume that $P_n^{-1}(E)\subset E$ for all $n$, then the sequence $(E_n)_{n=1}^\infty$ is decreasing and
 \begin{equation*}
  F=\bigcap_{n\geq 1} E_n=\{z\in E: \; (P_n\circ...\circ P_1)(z)\in E \text{ for all }n\geq 1\}.
 \end{equation*}
\end{proposition}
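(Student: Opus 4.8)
The plan is to realize $(E_n)$ as the orbit of $E$ under a composition of contractions on the complete metric space $(\R,\Gamma)$, so that hypothesis (\ref{e:sum}) becomes exactly a summability condition guaranteeing a Cauchy sequence. First I would set $A_k:=A_{P_k}$, which by (\ref{e:Greenaprzeciwobrazu}) maps $\R$ into $\R$ and is a contraction with ratio $1/d_k$, and observe that, since $(P_n\circ\cdots\circ P_1)^{-1}=P_1^{-1}\circ\cdots\circ P_n^{-1}$, one has $E_n=(A_1\circ\cdots\circ A_n)(E)$. Writing $\Phi_n:=A_1\circ\cdots\circ A_n$, the composition $\Phi_n$ is a contraction of $(\R,\Gamma)$ with ratio at most $1/(d_1\cdots d_n)$.

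The convergence then follows from a telescoping Cauchy estimate. Since $E_{n+1}=\Phi_n\bigl(P_{n+1}^{-1}(E)\bigr)$ while $E_n=\Phi_n(E)$, the contraction property gives
\[
\Gamma(E_{n+1},E_n)\le \frac{1}{d_1\cdots d_n}\,\Gamma\bigl(P_{n+1}^{-1}(E),E\bigr),
\]
so $\sum_n\Gamma(E_{n+1},E_n)$ is dominated by the convergent series (\ref{e:sum}); hence $(E_n)$ is Cauchy and converges to some $F\in\R$ by completeness of $(\R,\Gamma)$. Independence of the limit is equally direct: if $\widetilde E\in\R$ also satisfies (\ref{e:sum}), with associated limit $\widetilde F$, then $\Gamma(E_n,\widetilde E_n)=\Gamma\bigl(\Phi_n(E),\Phi_n(\widetilde E)\bigr)\le \Gamma(E,\widetilde E)/(d_1\cdots d_n)\to 0$, because each $d_k\ge 2$ forces $d_1\cdots d_n\ge 2^n\to\infty$; by the triangle inequality $F=\widetilde F$.

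For the final assertion, assume $P_n^{-1}(E)\subset E$ for every $n$. Monotonicity of preimages gives $E_{n+1}=\Phi_n\bigl(P_{n+1}^{-1}(E)\bigr)\subset \Phi_n(E)=E_n$, so $(E_n)$ is decreasing; in particular $G:=\bigcap_n E_n\subset E_1\subset E$, and $z\in G$ iff $(P_n\circ\cdots\circ P_1)(z)\in E$ for all $n$, which is exactly the claimed set description. It remains to identify $F$ with $G$. Because the sets decrease, (\ref{e:AsubsetB}) shows the Green functions $g_{E_n}$ increase, while $\Gamma$-convergence means $g_{E_n}\rightrightarrows g_F$. If $z_0\notin G$ then $z_0\notin E_{n_0}$ for some $n_0$; regularity and polynomial convexity of $E_{n_0}\in\R$ give $g_{E_{n_0}}(z_0)>0$, whence $g_F(z_0)\ge g_{E_{n_0}}(z_0)>0$ and $z_0\notin F$, proving $F\subset G$. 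Conversely, $z_0\in G$ forces $g_{E_n}(z_0)=0$ for all $n$ (each $E_n$ being polynomially convex), hence $g_F(z_0)=0$ and $z_0\in\widehat F=F$, giving $G\subset F$.

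I expect the genuine obstacle to lie not in the Banach-type convergence — which is bookkeeping once the contraction ratios $1/(d_1\cdots d_n)$ are identified — but in the closing identification $F=\bigcap_n E_n$, where one must pass from uniform convergence of Green functions to a set-theoretic equality. The crucial structural facts are that the limit $F$ is itself regular and polynomially convex, so that $g_F$ vanishes precisely on $F$, and that the monotonicity (\ref{e:AsubsetB}) makes $(g_{E_n})$ an increasing sequence; together these let me test membership in $F$ purely through the values $g_{E_n}(z_0)$, without ever needing to know that the intersection $G$ is itself regular.
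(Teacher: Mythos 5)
Your proof is correct. Note that the paper itself offers no proof of this proposition --- it is quoted verbatim from Klimek's cited work --- and your argument (realizing $E_n=(A_{P_1}\circ\cdots\circ A_{P_n})(E)$, using the contraction ratios $1/(d_1\cdots d_n)$ to telescope hypothesis (\ref{e:sum}) into a Cauchy estimate, and then identifying $F=\bigcap_n E_n$ via the monotone convergence of the Green functions together with the fact that $g_F$ vanishes exactly on the polynomially convex set $F$) is precisely the standard route taken in the cited source.
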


 Now we would like to use the Klimek metric in our case of guided sequences of polynomials.
The following proposition is an important step in proving that the compact   set  $\K[(p_{n})_{n=1}^\infty]$ obtained in Theorem \ref{thm:JuliadlaKW} is regular and polynomially convex. We
assume additionally that (\ref{e:finite}) holds.

\begin{proposition}\label{prop:GammaEiprzeciwobrazu}
  Let  $(p_n)_{n=1}^\infty$ be a  guided sequence of  polynomials    satisfying $(\ref{e:finite})$.
  If $E\in\R$, then
  $$\exists C>0\ \forall n\geq 1: \quad \Gamma(E,p_n^{-1}(E))\leq C.$$
\end{proposition}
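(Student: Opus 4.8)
The plan is to evaluate $\Gamma$ through its second expression in (\ref{e:Gamma}), namely
$$\Gamma(E,p_n^{-1}(E))=\max\Bigl(\sup_{z\in E}g_{p_n^{-1}(E)}(z),\ \sup_{z\in p_n^{-1}(E)}g_E(z)\Bigr),$$
and to bound the two suprema separately and uniformly in $n$ (note $p_n^{-1}(E)\in\R$, so the formula applies). By (\ref{e:Greenaprzeciwobrazu}) the first supremum equals $\sup_{z\in E}\frac{1}{\deg p_n}g_E(p_n(z))$.

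First I would control this first supremum; this is where hypothesis (\ref{e:finite}) is essential. Put $u:=\sup_{n\geq 1}\frac{1}{\deg p_n}\log^+|p_n|$. By (\ref{e:Greenaprzeciwobrazu}) and Example \ref{ex: koloGreen} each summand $\frac{1}{\deg p_n}\log^+|p_n|$ is the Green function $g_{p_n^{-1}(\D(0,1))}$, hence a continuous member of $\mathcal{L}$. Since (\ref{e:finite}) is precisely condition $(i)$ of Proposition \ref{prop: u*inL}, that proposition yields that $u$ is bounded above on every compact set, so $M_E:=\sup_E u<+\infty$. As $E$ is regular, $g_E\in\mathcal{L}$ by (\ref{e:L+}), so there is $C_E\geq 0$ with $g_E(w)\leq C_E+\log^+|w|$ for all $w$. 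Dividing by $\deg p_n\geq 1$ gives, for every $z\in E$ and every $n\geq 1$,
$$\frac{1}{\deg p_n}g_E(p_n(z))\leq \frac{C_E}{\deg p_n}+\frac{1}{\deg p_n}\log^+|p_n(z)|\leq C_E+u(z)\leq C_E+M_E,$$
a bound independent of $n$.

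Next I would bound the second supremum using Corollary \ref{cor:przeciwobrazkola}. A guided sequence satisfies the hypotheses of Lemma \ref{prop: tnzez}, so that corollary supplies $\varrho>0$ with $p_n^{-1}(\D(0,R))\subset\D(0,R)$ for all $R\geq\varrho$ and all $n\geq 2$. Choosing $R\geq\varrho$ large enough that $E\subset\D(0,R)$, I get $p_n^{-1}(E)\subset p_n^{-1}(\D(0,R))\subset\D(0,R)$ for every $n\geq 2$, whence
$$\sup_{z\in p_n^{-1}(E)}g_E(z)\leq \sup_{z\in\D(0,R)}g_E(z)=:C'<+\infty,$$
the right-hand side being finite since $g_E$ is continuous and $\D(0,R)$ is compact.

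Combining the two bounds gives $\Gamma(E,p_n^{-1}(E))\leq\max\{C_E+M_E,\,C'\}$ for all $n\geq 2$; the single value $\Gamma(E,p_1^{-1}(E))$ is finite because $p_1^{-1}(E)\in\R$, so $C:=\max\{C_E+M_E,\ C',\ \Gamma(E,p_1^{-1}(E))\}$ works. The delicate point is the first supremum: a guided sequence by itself need not keep it bounded (see Example \ref{ex:guidedinfty}), and it is exactly condition (\ref{e:finite}), passed through Proposition \ref{prop: u*inL}, that forces the uniform bound. The second supremum is comparatively routine, being controlled by the uniform preimage containment of Corollary \ref{cor:przeciwobrazkola}, with only the possibly degree-one polynomial $p_1$ requiring separate (and trivial) treatment.
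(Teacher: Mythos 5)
Your proof is correct and follows essentially the same route as the paper: the same splitting of $\Gamma(E,p_n^{-1}(E))$ into the two suprema from (\ref{e:Gamma}), with the second bounded via Corollary \ref{cor:przeciwobrazkola} and the first via (\ref{e:finite}) passed through Proposition \ref{prop: u*inL}. The only difference is that you finish the first estimate in one line using the Lelong-class bound $g_E(w)\leq C_E+\log^+|w|$, where the paper instead bounds $|p_n(z)|\leq C_3^{\deg p_n}$ on $E$ and then invokes the Maximum Principle together with the asymptotics $g_E(z)\leq\log|z|-\log{\rm cap}(E)+\varepsilon$, split into cases according to whether $C_3^{\deg p_n}\leq R$ --- your shortcut is a genuine (if minor) simplification, and your explicit separate treatment of $n=1$ is slightly more careful than the paper's.
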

\begin{proof}
Fix $E\in\R$  and $\varepsilon>\max\{0,\log {\rm cap}(E)\}$. Take $\varrho$ from Corollary \ref{cor:przeciwobrazkola}, fix $R\geq
\varrho
$ big enough to satisfy $E\subset \D(0,R)$
 and such that
\begin{equation}\label{e:Rvarepsilon}
  |z|> R\quad \Longrightarrow \quad g_E(z)\leq \log|z|- \log {\rm cap}(E)+\varepsilon
\end{equation}
(this is possible in view of Definitions \ref{defin: Greenfunction} and \ref{def:g_K}). By Corollary \ref{cor:przeciwobrazkola}   $$p_n^{-1}(E)\subset p_n^{-1}\left(\D(0,R)\right)\subset \D(0,R) \quad \text{ for } n\geq 2.$$ Hence
\begin{equation}\label{e:C_1}
  \forall n\geq 2:\qquad \sup_{z\in p_n^{-1}(E)}g_E(z)\leq \sup_{z\in \D(0,R)} g_E(z)=:C_1.
\end{equation}
Note that $C_1$ is a non-negative number    and does not depend on $n$.

The sequence $(p_n)_{n=1}^\infty$ satisfies  (\ref{e:finite}), hence by Proposition \ref{prop: boundedabove}, $$\exists C_2\ \forall z \in E \ \forall n \geq 1:\quad  \frac{1}{\deg p_n}\log |p_n(z)| \leq \frac{1}{\deg p_n}\log^+ |p_n(z)| \leq C_2.$$ Therefore
$
\forall n\geq 1 \ \forall z\in E
: \; |p_n(z)|\leq C_3^{\deg p_n},
$
where $C_3=\exp(C_2)>1$.
In view of (\ref{e:Greenaprzeciwobrazu}) we have
\begin{align}\nonumber
\forall n\geq 1: \quad& \sup_{z\in E}g_{p_n^{-1}(E)}(z) =\sup_{z\in E} \frac{1}{\deg p_n}g_E(p_n(z))\leq\\&\leq \sup_{z\in \D(0,C_3^{\deg p_n})}\frac{1}{\deg p_n}g_E(z)=\sup_{z\in \partial\D(0,C_3^{\deg p_n})}\frac{1}{\deg p_n}g_E(z), \label{e:szacowanieGreenaprzeciwobrazu}
\end{align}
and the last equality follows from the Maximum Principle.

 Consider the set
\[
\mathcal{N}:=\{n \in \mathbb{N}: \; C_3^{\deg p_n} \leq R\}.
\]
By the non-negativity of $g_E$ and the definition of $C_1$  in (\ref{e:C_1})
$$
\forall n\in \mathcal{N} \ \forall  \zeta\in \partial\D\left(0,C_3^{\deg p_n}\right):\quad \frac{1}{\deg p_n} g_E(\zeta)\leq g_E(\zeta)\leq
C_1.
$$
 Combining this with (\ref{e:szacowanieGreenaprzeciwobrazu}) gives
\begin{equation}\label{e:C1bis}
\forall n\in \mathcal{N}: \quad \sup_{z\in E}g_{p_n^{-1}(E)}(z) \leq C_1.
\end{equation}
On the other hand, (\ref{e:Rvarepsilon}) implies
$$
\forall n\notin \mathcal{N} \   \forall  \zeta\in \partial\D(0,C_3^{\deg p_n}):\quad   g_E(\zeta)\leq \deg p_n  \log C_3 - \log {\rm cap}(E)+\varepsilon.
$$
 Recall that $\log C_3=C_2>0$ and $\varepsilon-\log{\rm cap}(E)>0$ by the choice of $\varepsilon$. 
Combining this with (\ref{e:szacowanieGreenaprzeciwobrazu}) gives
\begin{align}\nonumber
  \forall n \notin \mathcal{N}: \quad \sup_{z\in E}g_{p_n^{-1}(E)}(z) &\leq   C_2+\frac1{\deg p_n}(\varepsilon-\log{\rm cap}(E)) \\&  \leq
   C_2+\varepsilon -\log {\rm cap}(E)=: C_4.  \label{e:C_5}
\end{align}
 Note that $C_4>0$ by the choice of $\varepsilon$.

It follows from (\ref{e:C_1}), (\ref{e:C1bis}) and (\ref{e:C_5}) that $
 \forall n\geq 1: \; \Gamma(E,p_n^{-1}(E))\leq \max\{C_1, C_4\}$.
 \end{proof}


\subsection{Julia sets  for guided sequences}

We will now apply the  results from Subsection \ref{s:Klimek}
to a sequence of contractions  
defined in (\ref{e:A_P}).

\begin{theorem}\label{thm:zbiornieautonomiczny}
  Let
  $(p_n)_{n=1}^\infty$ be a  guided sequence of polynomials  satisfying $(\ref{e:finite})$.
   Then  the sequence $$\left(A_{p_{1}}\circ...\circ A_{p_{n}}\right)_{n=1}^\infty$$  converges pointwise   in $(\mathcal{R},\Gamma)$  to a constant mapping with the value
  $\K[(p_{n})_{n=1}^\infty]$. In particular this   limit  set is
polynomially convex and regular.
\end{theorem}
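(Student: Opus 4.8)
The plan is to verify the hypotheses of the Enhanced Banach Contraction Principle (Theorem~\ref{th: EnhancedBCP}) for the sequence of maps $(A_{p_n})_{n=1}^\infty$ acting on the complete metric space $(\R,\Gamma)$, and then identify the resulting limit point with $\K[(p_{n})_{n=1}^\infty]$. First I would recall that by (\ref{e:A_P}) and the transformation formula (\ref{e:Greenaprzeciwobrazu}), each $A_{p_n}$ is a contraction of $(\R,\Gamma)$ with ratio $1/\deg p_n$. Since $(p_n)_{n=1}^\infty$ is guided, we have $\alpha=\limsup_{n\to\infty}\frac{1}{\deg p_n}\leq 1/2$, and in particular $\deg p_n\geq 2$ for all $n\geq 2$, so all but possibly the first map have contraction ratio at most $1/2$. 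Because the maps being composed in $A_{p_{1}}\circ\dots\circ A_{p_{n}}$ are reindexed, the Enhanced BCP applies to the tail sequence $(A_{p_n})_{n\geq 2}$ with uniform ratio bound $L=1/2<1$; the single isometry $A_{p_1}$ (if $\deg p_1=1$) can be handled by composing it afterward, or by invoking Remark~\ref{rem:pierwszy}.

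The key step is verifying the hypothesis $\forall E\in\R:\ \sup_{n\geq 1}\Gamma(A_{p_n}(E),E)<\infty$, that is, $\sup_{n\geq 1}\Gamma(p_n^{-1}(E),E)<\infty$ for each fixed $E\in\R$. This is precisely the content of Proposition~\ref{prop:GammaEiprzeciwobrazu}, which is where the extra hypothesis (\ref{e:finite}) gets used: guidedness alone is not enough (cf. Example~\ref{ex:guidedinfty}), but (\ref{e:finite}) together with Proposition~\ref{prop: u*inL} controls $\sup_n\frac{1}{\deg p_n}\log^+|p_n|$ on compacta, which bounds $\sup_{z\in E}g_{p_n^{-1}(E)}(z)$, while Corollary~\ref{cor:przeciwobrazkola} controls $\sup_{z\in p_n^{-1}(E)}g_E(z)$. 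Having both one-sided suprema bounded uniformly in $n$ gives the uniform bound on $\Gamma$ by the second expression in (\ref{e:Gamma}).

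With the hypotheses confirmed, Theorem~\ref{th: EnhancedBCP} yields a unique $c\in\R$ such that $A_{p_{1}}\circ\dots\circ A_{p_{n}}\to c$ pointwise in $(\R,\Gamma)$; since the limit is a single point of $\R$, the limiting map is the constant map with value $c$, and $c$ is automatically polynomially convex and regular by definition of $\R$. It then remains to identify $c$ with $\K[(p_{n})_{n=1}^\infty]$. I would do this by evaluating the limit at a convenient $E$: taking $E=\D(0,R)$ with $R$ an escape radius (available from Lemma~\ref{prop: tnzez}), one has $(A_{p_{1}}\circ\dots\circ A_{p_{n}})(E)=(p_n\circ\dots\circ p_1)^{-1}(E)$, and since the guided sequence forces these preimages to be nested and shrinking to the set of points with bounded orbit (as in the proof of Theorem~\ref{thm:JuliadlaKW}, using Corollary~\ref{cor: intersecting} together with Remark~\ref{rem:pierwszy}), the intersection equals $\K[(p_{n})_{n=1}^\infty]$.

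I expect the main obstacle to be the bookkeeping around the first polynomial $p_1$ possibly having degree $1$, so that $A_{p_1}$ is only an isometry rather than a strict contraction. The cleanest way to deal with this is to apply the Enhanced BCP to $(A_{p_n})_{n\geq 2}$, obtaining convergence of $A_{p_2}\circ\dots\circ A_{p_n}$ to the constant $\K[(p_{n})_{n=2}^\infty]$, and then compose once with the continuous (indeed isometric) map $A_{p_1}$, using Remark~\ref{rem:pierwszy} to conclude $A_{p_1}(\K[(p_{n})_{n=2}^\infty])=p_1^{-1}(\K[(p_{n})_{n=2}^\infty])=\K[(p_{n})_{n=1}^\infty]$. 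A secondary technical point is that Theorem~\ref{th: EnhancedBCP} delivers only pointwise convergence of the composed maps to the constant, but since the target is a single point this immediately gives convergence to the constant mapping as stated.
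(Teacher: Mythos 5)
Your proposal is correct and follows essentially the same route as the paper: the Enhanced Banach Contraction Principle applied to the tail $(A_{p_n})_{n\geq 2}$ with the uniform bound $\sup_n\Gamma(p_n^{-1}(E),E)<\infty$ supplied by Proposition~\ref{prop:GammaEiprzeciwobrazu}, identification of the limit through the nested preimages of a large disk $\D(0,R)$ from Corollary~\ref{cor:przeciwobrazkola} and Theorem~\ref{thm:JuliadlaKW}, and the possible isometry $A_{p_1}$ composed at the end via Remark~\ref{rem:pierwszy}. The only cosmetic difference is that the paper invokes Proposition~\ref{p:nagoya} to justify that the $\Gamma$-limit of the decreasing sequence of preimages equals their intersection, a step you sketch directly.
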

\begin{proof}
   By Theorem \ref{th: EnhancedBCP} and Proposition \ref{prop:GammaEiprzeciwobrazu},  for every $E \in \mathcal{R}$  the sequence   $ \left(A_{p_{2}}\circ...\circ A_{p_{n}}(E)\right)_{n=2}^\infty$  is convergent to the same  set $F\in\R$.

   Take $\varrho>0$ from Corollary \ref{cor:przeciwobrazkola}. Proposition \ref{p:nagoya} yields
   $$\forall R\geq \varrho:\quad  F=  \bigcap_{k=1}^\infty (p_{n}\circ...\circ p_{2})^{-1}\left(\D(0,R)\right) .$$
 Hence $F=\K[(p_{n})_{n=2}^\infty]$ by Theorem \ref{thm:JuliadlaKW}.

 If $\deg p_1>1$, in the same way (renumbering the sequence) we obtain that $ \left(A_{p_{1}}\circ...\circ A_{p_{n}}(E)\right)_{n=1}^\infty$   is convergent to   $\K[(p_n)_{n=1}^\infty] $.

If $\deg p_1=1$, the mapping $A_{p_1}$ is an isometry, hence the convergence of $ \left(A_{p_{2}}\circ...\circ A_{p_{n}}(E)\right)_{n=2}^\infty$  to  $\K[(p_{n})_{n=2}^\infty]$ implies the convergence of $ \left(A_{p_{1}}\circ...\circ A_{p_{n}}(E)\right)_{n=1}^\infty$  to $p_1^{-1}(\K[(p_{n})_{n=2}^\infty])=\K[(p_n)_{n=1}^\infty]$,  this last equality follows from   Remark \ref{rem:pierwszy}.
\end{proof}

\begin{cor}\label{cor:zbieznoscjednostajna}
    Let
  $(p_n)_{n=1}^\infty$ be a  guided sequence of polynomials   satisfying $(\ref{e:finite})$.
   Then
   $\ \forall E\in \R:\; g_{(p_{n}\circ...\circ p_{1})^{-1}(E)}\rightrightarrows g_{\K\left[(p_{n})_{n=1}^\infty\right]}.$

In particular the function sequence
$$\left(\frac1{ \deg p_{n}\cdot...\cdot \deg p_{1}}\log^+\left|p_{n}\circ...\circ p_{1}\right|\right)_{n=1}^\infty$$
is uniformly convergent in $\mathbb{C}$.
\end{cor}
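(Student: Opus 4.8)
The plan is to derive Corollary~\ref{cor:zbieznoscjednostajna} directly from Theorem~\ref{thm:zbiornieautonomiczny} by unwinding the meaning of convergence in Klimek's metric. First I would observe that Theorem~\ref{thm:zbiornieautonomiczny} gives, for every fixed $E\in\R$, the convergence of $\left(A_{p_1}\circ\dots\circ A_{p_n}\right)(E)$ to $\K[(p_n)_{n=1}^\infty]$ in $(\R,\Gamma)$. The composition of preimage maps is simply the preimage under the composition, that is,
\[
\left(A_{p_1}\circ\dots\circ A_{p_n}\right)(E)=(p_n\circ\dots\circ p_1)^{-1}(E).
\]
Thus the sequence $\bigl((p_n\circ\dots\circ p_1)^{-1}(E)\bigr)_{n=1}^\infty$ converges to $\K[(p_n)_{n=1}^\infty]$ in $(\R,\Gamma)$. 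By the characterization of convergence in Klimek's metric recalled after~(\ref{e:Gamma})---namely that $\Gamma$-convergence is equivalent to uniform convergence of the associated Green functions on all of $\cj$---this yields precisely $g_{(p_n\circ\dots\circ p_1)^{-1}(E)}\rightrightarrows g_{\K[(p_n)_{n=1}^\infty]}$, which is the first assertion.

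For the second assertion I would specialize $E$ to the closed unit disk $\D(0,1)$, which lies in $\R$ by Example~\ref{ex: koloGreen}. The transformation formula~(\ref{e:Greenaprzeciwobrazu}) applied to the polynomial $f=p_n\circ\dots\circ p_1$ of degree $\deg p_n\cdots\deg p_1$ gives
\[
g_{(p_n\circ\dots\circ p_1)^{-1}(\D(0,1))}=\frac{1}{\deg p_n\cdots\deg p_1}\,g_{\D(0,1)}\circ(p_n\circ\dots\circ p_1).
\]
Since $g_{\D(0,1)}(w)=\log^+|w|$ by Example~\ref{ex: koloGreen}, the right-hand side equals exactly
\[
\frac{1}{\deg p_n\cdots\deg p_1}\log^+\bigl|p_n\circ\dots\circ p_1\bigr|.
\]
Applying the first part of the corollary with this choice of $E$ then shows that this function sequence converges uniformly in $\cj$ to $g_{\K[(p_n)_{n=1}^\infty]}$, completing the proof.

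There is essentially no hard step here: the corollary is a formal consequence of Theorem~\ref{thm:zbiornieautonomiczny} together with two facts already in hand, the dictionary between $\Gamma$-convergence and uniform Green-function convergence, and the preimage formula~(\ref{e:Greenaprzeciwobrazu}). The only point requiring a moment of care is verifying that the composition of the contraction maps $A_{p_n}$ literally coincides with taking the preimage under the composite polynomial, so that the abstract convergence statement in $(\R,\Gamma)$ translates into a statement about the explicit sets $(p_n\circ\dots\circ p_1)^{-1}(E)$; this is immediate from the definition~(\ref{e:A_P}) of $A_P$. I would expect the write-up to be just a few lines.
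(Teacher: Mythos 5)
Your proposal is correct and follows essentially the same route as the paper: the first assertion is read off from Theorem~\ref{thm:zbiornieautonomiczny} together with the equivalence between $\Gamma$-convergence and uniform convergence of Green functions, and the second follows by taking $E=\D(0,1)$ and applying (\ref{e:Greenaprzeciwobrazu}) with Example~\ref{ex: koloGreen}. The only difference is that you spell out the identification $\left(A_{p_1}\circ\dots\circ A_{p_n}\right)(E)=(p_n\circ\dots\circ p_1)^{-1}(E)$, which the paper leaves implicit.
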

\begin{proof}
  It follows directly from Theorem \ref{thm:zbiornieautonomiczny}
  and the definition of Klimek's metric  (\ref{e:Gamma}). The last assertion is a consequence of (\ref{e:Greenaprzeciwobrazu}) and the formula for the Green function of the unit disk (cf. Example \ref{ex: koloGreen}).
 \end{proof}

 Let us note in passing that if $f_n:\cj\ni z\longmapsto z^2+c_n\in\cj$ for $n\geq 1$ with $(c_n)_{n=1}^\infty \subset \D(0,\frac14)$, then $(f_n)_{n=1}^\infty$ is guided and $g_{\K[(f_n)_{n=1}^\infty]}$ is H\"older continuous (cf. \cite{P}).

The following approximation of the non-autonomous Julia set by the autonomous Julia sets of compositions can be easily shown (cf. \cite[Proposition 5]{AKK}).
\begin{cor}
    Let
  $(p_n)_{n=1}^\infty$ be a  guided sequence of polynomials     satisfying $(\ref{e:finite})$.
 Then
  $ \lim_{k\rightarrow \infty} \Gamma\left(\K[p_{k}\circ...\circ p_{1}], \K[(p_{n})_{n=1}^\infty]\right)=0.$
\end{cor}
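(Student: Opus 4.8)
The plan is to compare the autonomous Julia set $\K[p_k\circ\dots\circ p_1]$ with the non-autonomous limit set $\K[(p_n)_{n=1}^\infty]$ using the two convergence facts already established. On the one hand, Theorem \ref{thm:zbiornieautonomiczny} gives that the sequence $\left(A_{p_1}\circ\dots\circ A_{p_n}(E)\right)_{n=1}^\infty$ converges in $(\R,\Gamma)$ to $\K[(p_n)_{n=1}^\infty]$ for every fixed $E\in\R$. On the other hand, the autonomous theory recalled in Subsection \ref{s:Klimek} tells us that for the single polynomial $Q_k:=p_k\circ\dots\circ p_1$ of degree at least $2$, we have
\[
\lim_{m\to\infty}(A_{Q_k})^m(E)=\K[Q_k]=\K[p_k\circ\dots\circ p_1]
\]
in $(\R,\Gamma)$, for any starting set $E\in\R$.

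First I would fix a convenient $E\in\R$, say a large closed disk $E=\D(0,R)$ with $R\geq\varrho$ from Corollary \ref{cor:przeciwobrazkola}, chosen so that $p_n^{-1}(\D(0,R))\subset\D(0,R)$ for all $n\geq 2$ (and $Q_k^{-1}(E)\subset E$ as well). With this choice both approximating sequences are \emph{decreasing} nested families of sets, so the limits are realized as intersections: by Proposition \ref{p:nagoya},
\[
\K[(p_n)_{n=1}^\infty]=\bigcap_{k\geq 1}(p_k\circ\dots\circ p_1)^{-1}(E)=\bigcap_{k\geq 1}A_{p_1}\circ\dots\circ A_{p_k}(E),
\]
while Corollary \ref{cor: intersecting} gives $\K[Q_k]=\bigcap_{m\geq 1}Q_k^{-m}(E)$. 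The point is that the first $k$ preimage-steps of the two constructions coincide: both begin with $A_{p_1}\circ\dots\circ A_{p_k}(E)=Q_k^{-1}(E)$. The non-autonomous construction then continues applying the genuine maps $A_{p_{k+1}},A_{p_{k+2}},\dots$, whereas the autonomous one re-applies the whole block $A_{Q_k}$ again.

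The key step is to exploit that $A_{Q_k}$ is a contraction with ratio $1/\deg Q_k\leq 1/2^{\,k-1}$, which tends to $0$ as $k\to\infty$. The triangle inequality for $\Gamma$ gives
\[
\Gamma\bigl(\K[Q_k],\K[(p_n)_{n=1}^\infty]\bigr)
\leq \Gamma\bigl(\K[Q_k],Q_k^{-1}(E)\bigr)+\Gamma\bigl(Q_k^{-1}(E),\K[(p_n)_{n=1}^\infty]\bigr).
\]
The second term is exactly $\Gamma\bigl(A_{p_1}\circ\dots\circ A_{p_k}(E),\,\K[(p_n)_{n=1}^\infty]\bigr)$, which tends to $0$ by Theorem \ref{thm:zbiornieautonomiczny}. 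For the first term I would estimate, using the contraction property of $A_{Q_k}$ and the fixed-point characterization $A_{Q_k}(\K[Q_k])=\K[Q_k]$,
\[
\Gamma\bigl(\K[Q_k],Q_k^{-1}(E)\bigr)=\Gamma\bigl(A_{Q_k}(\K[Q_k]),A_{Q_k}(E)\bigr)\leq \tfrac{1}{\deg Q_k}\,\Gamma\bigl(\K[Q_k],E\bigr),
\]
and since all the sets $\K[Q_k]$ sit inside the fixed disk $E=\D(0,R)$ (as $R$ is a common escape radius, by Lemma \ref{prop: tnzez} and Lemma \ref{lem:escaperadius}), the distances $\Gamma(\K[Q_k],E)$ are uniformly bounded, say by some constant $M$. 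Hence this term is at most $M/\deg Q_k\leq M/2^{\,k-1}\to 0$.

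I expect the main obstacle to be the uniform bound on $\Gamma(\K[Q_k],E)$: one must check that the autonomous Julia sets $\K[Q_k]$ do not escape to infinity or degenerate, and this is where the common escape radius coming from the guided hypothesis (Lemma \ref{prop: tnzez}) is essential, together with the finiteness condition (\ref{e:finite}) guaranteeing the whole setup stays in $\R$. Once both terms are shown to vanish, combining them via the displayed triangle inequality yields $\lim_{k\to\infty}\Gamma\bigl(\K[p_k\circ\dots\circ p_1],\K[(p_n)_{n=1}^\infty]\bigr)=0$, as claimed.
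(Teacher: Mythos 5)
Your overall architecture is sound and is the natural route (the paper itself gives no written proof of this corollary, only a pointer to \cite{AKK}, so there is nothing in-text to compare against): split via the triangle inequality into $\Gamma\bigl(\K[Q_k],Q_k^{-1}(E)\bigr)+\Gamma\bigl(Q_k^{-1}(E),\K[(p_n)_{n=1}^\infty]\bigr)$ with $Q_k=p_k\circ\dots\circ p_1$, kill the second term by Theorem \ref{thm:zbiornieautonomiczny}, and kill the first by the contraction property of $A_{Q_k}$ together with $A_{Q_k}(\K[Q_k])=\K[Q_k]$. All of those individual identities are correct.

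The gap is exactly in the step you flag as the main obstacle, and your proposed justification for it does not work. You claim $\Gamma(\K[Q_k],E)\leq M$ uniformly ``since all the sets $\K[Q_k]$ sit inside the fixed disk $E=\D(0,R)$.'' Containment does not bound Klimek's distance: if $A\subset B$ then indeed $\sup_{z\in A}g_B(z)=0$, but the other half of (\ref{e:Gamma}), namely $\sup_{z\in B}g_A(z)$, can be arbitrarily large when $A$ is small inside $B$ --- for instance $\Gamma\bigl(\D(0,\varepsilon),\D(0,1)\bigr)=\log(1/\varepsilon)\to\infty$ as $\varepsilon\to 0$. And the sets $\K[Q_k]$ can genuinely shrink: for the guided sequence of Example \ref{ex:juliafinite} the compositions are of the form $c_kz^{2^k}$ with filled Julia sets equal to disks whose radii tend to $0$, so $\Gamma(\K[Q_k],E)\to\infty$ there. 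That sequence violates (\ref{e:finite}), which tells you the hypothesis (\ref{e:finite}) must actually be invoked at this point; a common escape radius alone cannot deliver the bound. The repair is short: by the a priori estimate in the Banach contraction principle, $\Gamma(\K[Q_k],E)\leq\bigl(1-\tfrac{1}{\deg Q_k}\bigr)^{-1}\Gamma\bigl(A_{Q_k}(E),E\bigr)\leq 2\,\Gamma\bigl(Q_k^{-1}(E),E\bigr)$ once $\deg Q_k\geq 2$, and $\Gamma\bigl(Q_k^{-1}(E),E\bigr)$ is bounded in $k$ because $(Q_k^{-1}(E))_{k}$ converges in $(\R,\Gamma)$ by Theorem \ref{thm:zbiornieautonomiczny} and convergent sequences in a metric space are bounded. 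With that substitution (and discarding the finitely many $k$ for which $\deg Q_k=1$, where $\K[Q_k]$ is not even defined), your first term is $O(1/\deg Q_k)=O(2^{-(k-1)})$ and the proof closes.
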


  \begin{remark}
  Assumption (\ref{e:finite})  cannot be dropped in the results above. The sequence  $(p_n)_{n=1}^\infty$ from Example \ref{ex:guidedinfty}
does not satisfy this condition and the non-autonomous Julia set associated with  it
is $\{0\} \notin \mathcal{R}$.
\end{remark}

We recall now another result due to Klimek.

\begin{theorem}[{{\cite[Corollary 5]{Kl95}}}]\label{th:Klimekcap}
    $$\forall E,F\in \R:\quad \left|\log{\rm cap} (E)-\log{\rm cap}(F)\right|\leq \Gamma(E,F).$$ In particular the logarithmic capacity is continuous on $(\R,\Gamma)$.
\end{theorem}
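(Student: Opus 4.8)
The final statement to prove is Theorem~\ref{th:Klimekcap}, due to Klimek, asserting that $\left|\log{\rm cap}(E)-\log{\rm cap}(F)\right|\leq \Gamma(E,F)$ for all $E,F\in\R$. The plan is to exploit the definition of $\Gamma$ in (\ref{e:Gamma}) together with the asymptotic behavior of the Green function at infinity encoded in Definition~\ref{defin: Greenfunction}(3) and Definition~\ref{def:g_K}. Recall that ${\rm cap}(E)=\exp(-\gamma_E)$, where $\gamma_E$ is the finite limit of $g_E(z)-\log|z|$ as $z\to\infty$; thus $\log{\rm cap}(E)=-\gamma_E=-\lim_{|z|\to\infty}\bigl(g_E(z)-\log|z|\bigr)$. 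Consequently
\[
\log{\rm cap}(E)-\log{\rm cap}(F)=\lim_{|z|\to\infty}\bigl(g_F(z)-g_E(z)\bigr),
\]
since the $\log|z|$ terms cancel in the difference.

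First I would make this limit rigorous: write $g_E(z)-\log|z|\to\gamma_E$ and $g_F(z)-\log|z|\to\gamma_F$, so their difference $g_E(z)-g_F(z)\to\gamma_E-\gamma_F$ as $|z|\to\infty$. The key estimate is then immediate from (\ref{e:Gamma}): for every $z\in\cj$ we have $|g_E(z)-g_F(z)|\leq\Gamma(E,F)$ by definition of the supremum. Taking the limit as $|z|\to\infty$ preserves the inequality, yielding
\[
|\gamma_E-\gamma_F|=\lim_{|z|\to\infty}|g_E(z)-g_F(z)|\leq\Gamma(E,F).
\]
Since $|\log{\rm cap}(E)-\log{\rm cap}(F)|=|-\gamma_E-(-\gamma_F)|=|\gamma_E-\gamma_F|$, this is exactly the desired bound.

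For the ``in particular'' clause about continuity, I would observe that the inequality shows the map $E\mapsto\log{\rm cap}(E)$ is $1$-Lipschitz from $(\R,\Gamma)$ into $\rj$, hence continuous; composing with the continuous exponential then gives continuity of $E\mapsto{\rm cap}(E)$ itself. No obstacle of substance arises here: the entire argument is a two-line consequence of the definition of $\Gamma$ and the definition of logarithmic capacity via the constant $\gamma$ at infinity. The only point requiring minor care is the interchange of limit and absolute value, which is justified because the absolute value is continuous and the pointwise limit $g_E(z)-g_F(z)\to\gamma_E-\gamma_F$ exists as a genuine finite limit (both Green functions have the prescribed logarithmic growth at infinity). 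Thus the proof is essentially a direct evaluation of $\Gamma$ along the sequence of points tending to infinity, where the difference of Green functions converges precisely to the difference of the two capacity exponents.
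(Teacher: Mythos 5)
Your proof is correct. Note that the paper does not prove this statement at all — it is quoted from \cite[Corollary 5]{Kl95} — so there is no in-paper argument to compare against; your derivation, letting $|z|\to\infty$ in the pointwise bound $|g_E(z)-g_F(z)|\leq\Gamma(E,F)$ and using that $g_E(z)-\log|z|\to\gamma_E=-\log{\rm cap}(E)$ (Definitions \ref{defin: Greenfunction}(3) and \ref{def:g_K}), is the standard one and is complete, including the Lipschitz observation for the continuity clause.
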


Recall that ${\rm cap}\left(\D(0,1)\right)=1$, moreover because of (\ref{e:Greenaprzeciwobrazu}) we have
\begin{equation}\label{e:capprzeciwobrazukola}
    {\rm cap}\left(f^{-1}\left(\D(0,1)\right)\right)=1
    \end{equation}  for any non-constant monic polynomial $f$ too.

\begin{cor}\label{cor:1}
      Let  $(p_n)_{n=1}^\infty$ be a  guided sequence of monic polynomials   satisfying $(\ref{e:finite})$.
   Then
$
{\rm cap}\left(\K[(p_{n})_{n=1}^\infty]\right)=1$.
\end{cor}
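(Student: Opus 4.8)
The plan is to combine Corollary~\ref{cor:zbieznoscjednostajna} with the continuity of the logarithmic capacity (Theorem~\ref{th:Klimekcap}) and the normalization~(\ref{e:capprzeciwobrazukola}). First I would fix any $E\in\R$; the natural choice is $E=\D(0,1)$, so that each preimage $(p_n\circ\dots\circ p_1)^{-1}(E)$ is the preimage of the closed unit disk under the composition $p_n\circ\dots\circ p_1$. Since all the $p_k$ are monic, the composition $p_n\circ\dots\circ p_1$ is monic as well, and hence~(\ref{e:capprzeciwobrazukola}) applies: for every $n\geq 1$ we have
\[
{\rm cap}\left((p_n\circ\dots\circ p_1)^{-1}\left(\D(0,1)\right)\right)=1.
\]

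Next I would invoke Corollary~\ref{cor:zbieznoscjednostajna}, which asserts that the sets $E_n:=(p_n\circ\dots\circ p_1)^{-1}(E)$ converge to $\K[(p_n)_{n=1}^\infty]$ in Klimek's metric $\Gamma$ (this is exactly the statement $g_{E_n}\rightrightarrows g_{\K[(p_n)_{n=1}^\infty]}$, i.e.\ $\Gamma(E_n,\K[(p_n)_{n=1}^\infty])\to 0$). By Theorem~\ref{th:Klimekcap} the map $F\mapsto \log{\rm cap}(F)$ is continuous on $(\R,\Gamma)$, so from $E_n\to\K[(p_n)_{n=1}^\infty]$ we conclude
\[
\log{\rm cap}(E_n)\longrightarrow \log{\rm cap}\left(\K[(p_n)_{n=1}^\infty]\right).
\]

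Finally, since $\log{\rm cap}(E_n)=\log 1=0$ for every $n$ by the previous step, the limit must also be $0$, giving ${\rm cap}\left(\K[(p_n)_{n=1}^\infty]\right)=1$. The only point requiring care is to confirm that the target set $\K[(p_n)_{n=1}^\infty]$ genuinely lies in $\R$, so that Theorem~\ref{th:Klimekcap} is applicable to it; but this is already guaranteed by Theorem~\ref{thm:zbiornieautonomiczny}, which states that the limit set is compact, polynomially convex and regular. I do not expect any serious obstacle here: the result is essentially immediate once the convergence in $\Gamma$ and the monic normalization are in place, so the proof is a short chaining of continuity of ${\rm cap}$ with the constant value $1$ along the approximating sequence.
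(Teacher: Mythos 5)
Your proof is correct and follows exactly the paper's own argument: apply Theorem \ref{thm:zbiornieautonomiczny} (equivalently Corollary \ref{cor:zbieznoscjednostajna}) with $E=\D(0,1)$, note that compositions of monic polynomials are monic so (\ref{e:capprzeciwobrazukola}) gives capacity $1$ along the approximating sequence, and conclude by the continuity of the logarithmic capacity from Theorem \ref{th:Klimekcap}. The only difference is that you spell out the intermediate steps that the paper's two-line proof leaves implicit.
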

\begin{proof}
  In view of Theorem \ref{thm:zbiornieautonomiczny},
  $$ \Gamma\big(\left(p_{n}\circ...\circ p_{1}\right)^{-1}\left(\D(0,1)\right), \K\left[(p_{n})_{n=1}^\infty\right]\big) \longrightarrow 0 \quad \text{  as }\quad n\rightarrow \infty.$$ The assertion follows from (\ref{e:capprzeciwobrazukola}) and Theorem \ref{th:Klimekcap}.
\end{proof}


\subsection{The case of Chebyshev polynomials on   $[-1,1]$  (cf. Main Example)
} \label{s: toy}

\begin{example}\label{ex:rysunki}
The following pictures (prepared by Maciej Klimek) show  approximations of $\K[(t_n)_{n=1}^\infty]$, where $(t_n)_{n=1}^\infty$ is the sequence of minimal polynomials on $[-1,1]$ (see Example \ref{ex:Td}). The sets depicted here are, from left to right:
\begin{itemize}
\item $(t_8\circ...\circ t_2\circ t_1)^{-1}([-1,1]\times [-0.0005,0.0005])$, (which is   used as  an approximation of $(t_8\circ...\circ t_2\circ t_1)^{-1}([-1,1])$),
\item $(t_5\circ...\circ t_2\circ t_1)^{-1}\left(\D(0,1)\right)$, \item
$(t_{100}\circ...\circ t_2\circ t_1)^{-1}\left(\D(0,1)\right)$.
\end{itemize}

\includegraphics[width=4 cm]{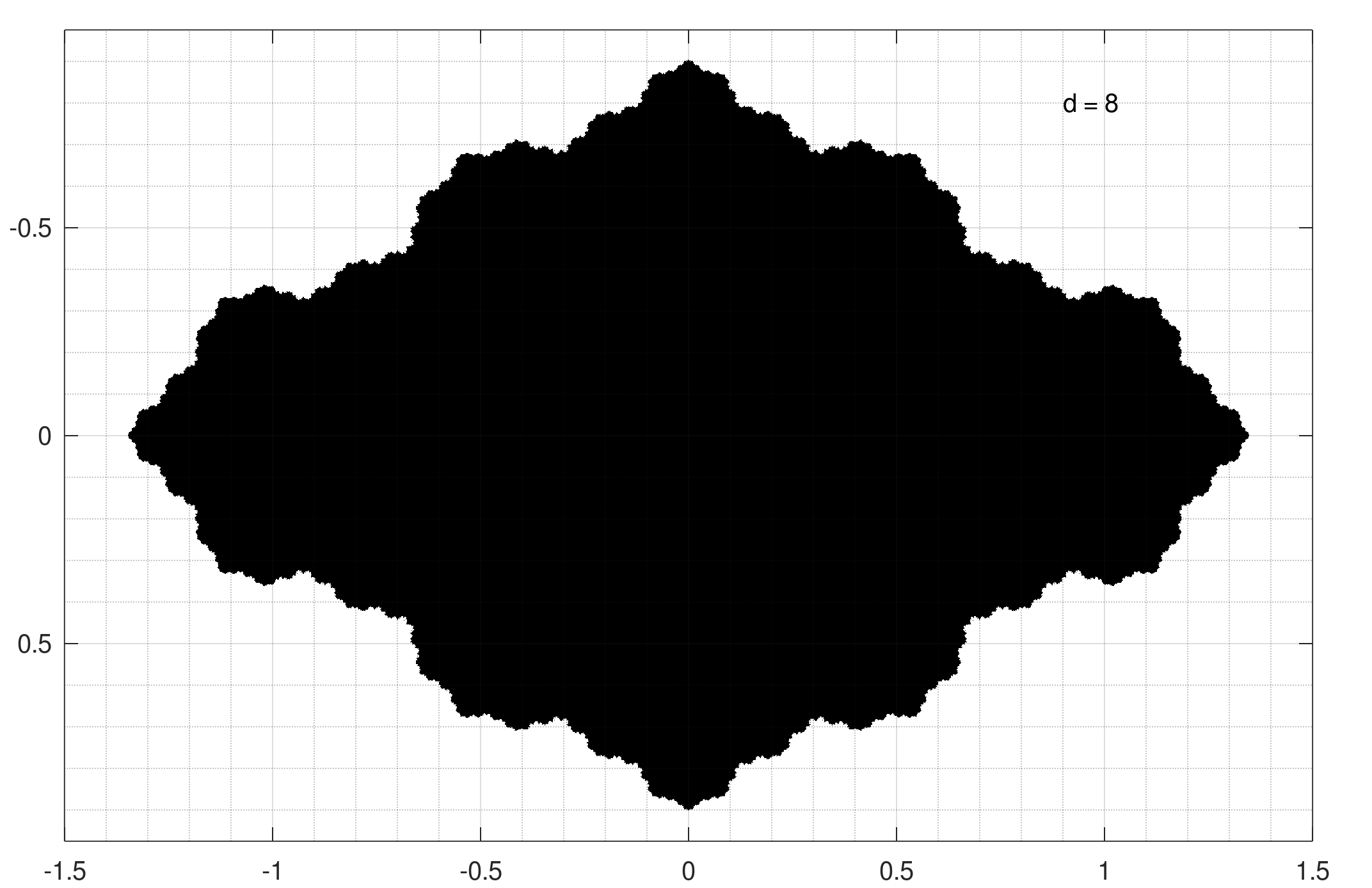}\hfill \includegraphics[width=4 cm]{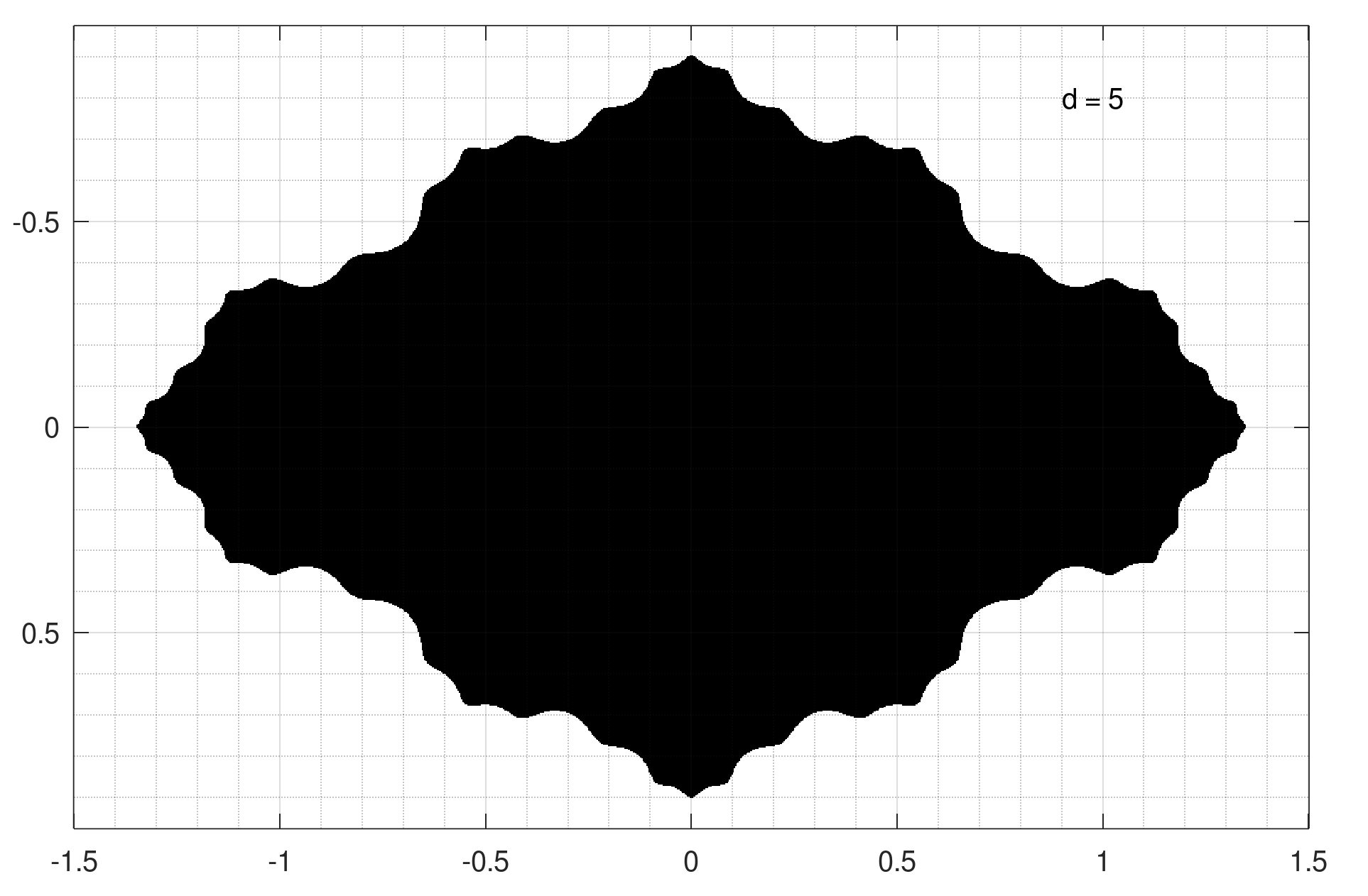}\hfill \includegraphics[width=4 cm]{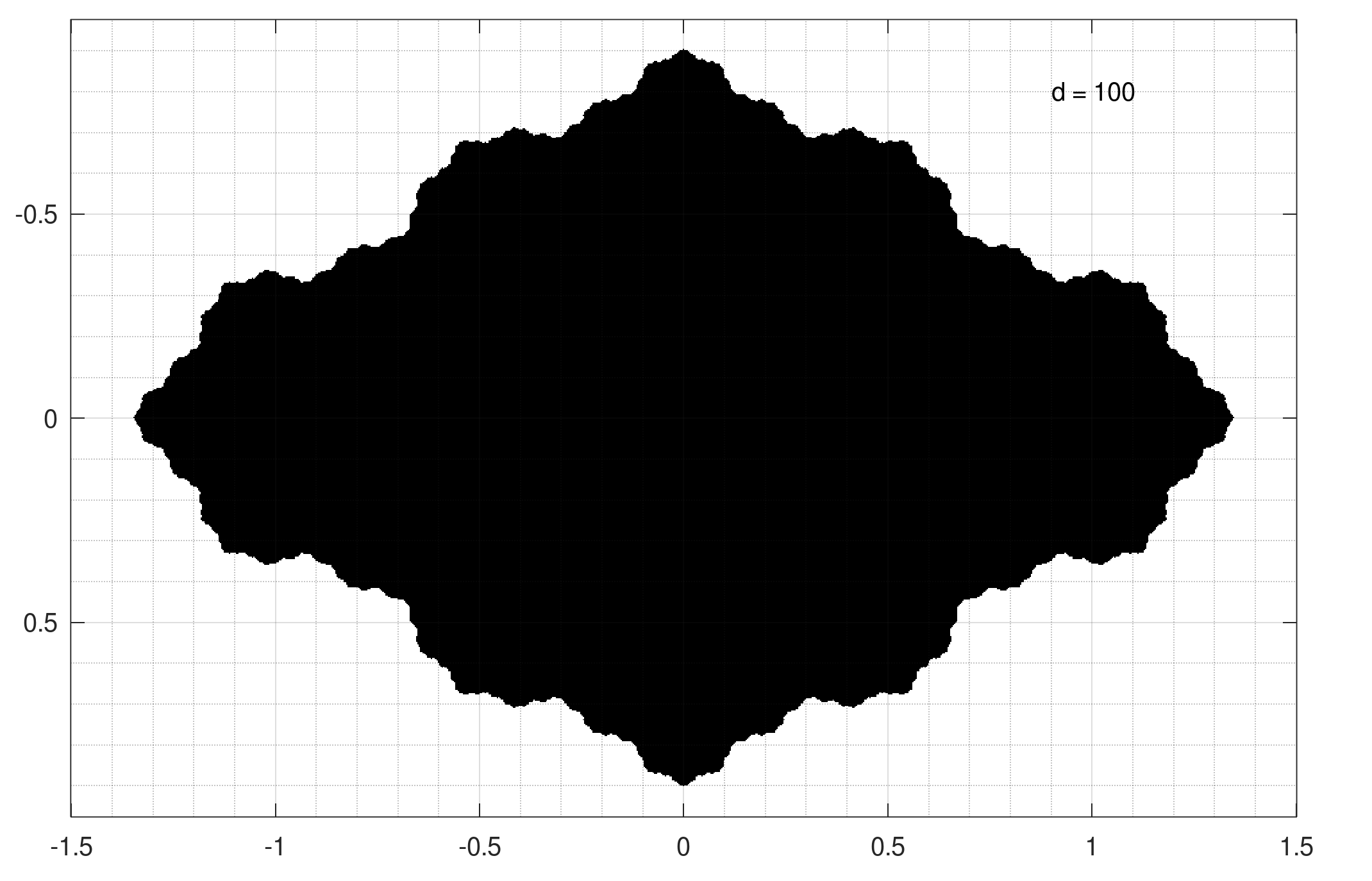}

  Observe some simple geometric  properties of the set $\K[(t_n)_{n=1}^\infty]$.

\underline{Property 1:} $ [-1,1] \subset \K[(t_n)_{n=1}^\infty]$.  In particular, $\K[(t_n)_{n=1}^\infty] \cap  \rj
\neq \emptyset$.
\begin{proof} The set $I=[-1,1]$ is totally invariant under every polynomial $T_n$. Hence $t_1(I)=T_1(I)=I$, $t_2(I)=(1/2)T_2(I)=[-1/2,1/2] \subset I$, $t_n(I)=(1/2^{n-1})T_n(I)=[-1/2^{n-1},1/2^{n-1}] \subset I$, and  $(t_n\circ...\circ t_2\circ t_1)(I) \subset I$ for every $n \geq 1$.
\end{proof}

\underline{Property 2:} If $z \in \K[(t_n)_{n=1}^\infty]$, then $\{-z,\bar{z}\} \subset \K[(t_n)_{n=1}^\infty]$.
\begin{proof} We have $t_1(z)=z$ and $t_2(-z)=t_2(z)$ for every $z \in \mathbb{C}$. Hence $(t_n\circ...\circ t_2\circ t_1)(-z)=(t_n\circ...\circ t_2\circ t_1)(z)$ for every $n \geq 2$ and every $z \in \mathbb{C}$. Moreover, all $t_n$ have real coefficients, so  $(t_n\circ...\circ t_1)(\overline{z})=\overline{(t_n\circ...\circ t_1)(z)}$ for every $n \geq 2$ and every $z \in \mathbb{C}$. It follows that each of the sequences $\left((t_n\circ...\circ t_1)(\overline{z})\right)_{n=1}^\infty$ and $\left((t_n\circ...\circ t_1)(-z)\right)_{n=1}^\infty$ is bounded if and only if $\left((t_n\circ...\circ t_1)(z)\right)_{n=1}^\infty$ is.
\end{proof}

\underline{Property 3:} $\exists R\geq 2:\ \K[(t_n)_{n=1}^\infty]\subset E_R$, where $E_R$ is the filled ellipse with foci $\pm 1$ and  semiaxes $a_R=\frac{1}{2}(R+\frac{1}{R}), \ b_R=\frac{1}{2}(R-\frac{1}{R})$.
\begin{proof} When $R >1$, such (filled) ellipses are sublevel sets (see Definition \ref{def:sublevel} and  Examples \ref{ex: odcGreen} and \ref{ex: elipsyGreen}) of the Green function $g_I$, which tends to infinity as $|z| \to \infty$. Hence we have $\mathbb{C}=\bigcup_{R>1}E_R$, and, by compactness, $\exists R>1: \K[(t_n)_{n=2}^\infty] \subset E_R$. The capacity of $E_R$ is $(a_R+b_R)/2=R/2$ (see again Example \ref{ex: elipsyGreen}). By monotonicity of capacity and Corollary \ref{cor:1}, we need $R \geq 2$ for the inclusion $\K[(t_n)_{n=2}^\infty]\subset E_R$.
\end{proof}

\underline{Property 4:} $\K[(t_n)_{n=1}^\infty]\subset\!\!\!\!\!/ \ E_2$.
\begin{proof}
Note that $E_2$ has the major semiaxis $a_2=5/4$ and the minor semiaxis $b_2=3/4$.
We need to find a point in $\K[(t_n)_{n=1}^\infty]\setminus E_2$.

Let us start with the following facts
for $n\in\{1,2,...\}$:
\begin{enumerate}
\item[(a)] $\forall x\in \rj:\; t_n(-x)=(-1)^nt_n(x)$;
\item[(b)] $t_n$ is increasing in the interval $(1,+\infty)$;
\item[(c)] $\max_{z \in E_2}|t_n(z)|=|t_n(5/4)|=|t_n(-5/4)|=1+2^{-2n} \leq 5/4$.
\end{enumerate}
(a) and (b) are known. To prove (c), we first compute $\max_{w \in E_2}|T_n(w)|$ (cf. \cite{Faber}, \cite{Gaier}). Recall that the classical Chebyshev polynomials satisfy the relation
$T_n \left(\frac{z+z^{-1}}{2}\right)=\frac{z^n+z^{-n}}{2}$ for $n\in\{1,2,...\}.$
For $z=2e^{i\theta}$ with $\theta \in [0,2\pi)$ we thus have
\begin{align*}
T_n \left(\frac{z+z^{-1}}{2}\right)&=\frac{2^ne^{in\theta}+2^{-n}e^{-in\theta}}{2}\\&=\frac{1}{2}\left((2^n+2^{-n})\cos n\theta +i(2^n-2^{-n})\sin n\theta\right).
\end{align*}
Then\[
\left|T_n  \left(\frac{z+z^{-1}}{2}\right)\right|^2=\frac{1}{4}\left(2^{2n}+2\cos 2n\theta +2^{-2n}\right)
\]
achieves its maximum when $\theta =0$ or $\theta=\pi$. Checking values for the corresponding $z=2$ or $z=-2$ we get  $$\max_{w \in E_2}\left|T_n(w)\right|=\left|T_n\left(\frac{2+2^{-1}}2\right)\right|=\left|T_n\left(-\frac{2+2^{-1}}2\right)\right|=2^{n-1}+2^{-(n-1)}.$$
Hence (c) is proved.

Observe now that Property 1 together with (a), (b) and (c) implies that $[-5/4,5/4] \subset \K[(t_n)_{n=1}^\infty]$. Indeed, for every $n \geq 1$ we have $$t_n\left([-5/4,5/4])=t_n([-5/4,-1]\cup[-1,1]\cup[1,5/4]\right) \subset [-5/4,5/4],$$
consequently $ (t_n\circ...\circ t_1)([-5/4,5/4])\subset [-5/4,5/4]$
and we get the inclusion
$[-5/4,5/4] \subset \K[(t_n)_{n=1}^\infty]$.

Consider the point $z_0=4i/5$, which  does not belong to $E_2$ (since the minor semiaxis of $E_2$ is $b_2=3/4 < 4/5$). Now, $$t_2(z_0)=z_0^2-1/2=-57/50 \in (-5/4,-1),$$ hence,   for every $n \geq  2$ we have $(t_n \circ ...\circ t_2\circ t_1)(z_0) \in [-5/4,5/4]$, and so $z_0 \in \K[(t_n)_{n=1}^\infty]$.
\end{proof}
\end{example}


{\footnotesize \begin{acknowledgements}
 Both authors thankfully acknowledge their participation in Thematic Research Programme ``Modern holomorphic dynamics and related
fields'', Excellence Initiative -- Research University programme at the
University of Warsaw (a mini-semester in spring 2023). The paper was partially written thanks to the  support from the programme.

The second named author  extends her thanks to the Faculty of
Mathematics, Informatics and Mechanics of the University of Warsaw  for supporting her participation in the thematic semester ``Dynamical Systems.
Topological, smooth and holomorphic dynamics, ergodic theory, fractals''
  in Stefan Banach International Mathematical Center at the Institute of Mathematics of the Polish Academy of Sciences in Warsaw (part of ``Simons Semesters in Banach Center: 2020s vision.'') and in the conference ``Complex dynamics: connections to other fields'', at the University of Warsaw Conference Center in Ch\k{e}ciny, Poland, as well as  to
the Chair of Approximation, Institute of Mathematics,  Jagiellonian University,  Krak\'ow, for hosting her in March -- June 2023 during her study leave from the American Mathematical Society.

We would also like to thank  Christian Henriksen for the discusion on the notion of $K$-guided sequences and Maciej Klimek for preparing the figures for us.
\end{acknowledgements}}



\begin{thebibliography}{BBCL}

\bibitem{AKK} A. Alghamdi, M. Klimek, M. Kosek, {\it Attractors of compactly generated semigroups of regular
polynomial mappings}, Complexity 2018 (2018), Article ID 5698021, 11 pp.








\bibitem{BE} T. Bayraktar, M. Efe, {\it On dynamics of asymptotically minimal polynomials}, J. Approx. Theory 295 (2023), No. 105956.



\bibitem{BCKS}  L. Bialas-Ciez, M. Kosek, M. Stawiska, {\it On Lagrange polynomials and the rate of approximation of planar sets by polynomial Julia sets}, J. Math. Anal. Appl. 464 (2018), 507-530.


\bibitem{BBCL} T. Bloom, L. Bos, C. Christensen, N. Levenberg, {\it Polynomial interpolation of holomorphic functions in $\mathbb{C}$ and $\mathbb{C}^n$}, Rocky Mountain J.   Math.   22 (1992), 441-470.





\bibitem{BB03} R. Br\"uck,  M. B\"uger, {\it Generalized iteration}, Comput. Methods Funct. Theory 3 (2003),   201-252.

\bibitem{B97}  M. B\"uger, {\it Self-similarity of Julia sets of the composition of polynomials},  Ergodic Theory Dynam. Systems 17 (1997), 1289-1297.

\bibitem{CGbook} L. Carleson, T. W.  Gamelin,
{\it Complex dynamics},
Universitext  Tracts  Math., Springer-Verlag, New York   1993.


\bibitem{CHPP} J. S. Christiansen,  C. Henriksen, H. L.  Pedersen,  C. L. Petersen,   {\it Filled Julia sets of Chebyshev
polynomials},  J. Geom. Anal. 31 (2021), 12250-12263.

\bibitem{CSZ4} J. S. Christiansen, B. Simon, M. Zinchenko, {\it Asymptotics of Chebyshev polynomials. IV. Comments on the Complex Case}, J. Anal. Math. 141 (2020),  207-223.

\bibitem{Douady} A. Douady, {\it
Does a Julia set depend continuously on the polynomial?}
 Complex dynamical systems (Cincinnati, OH, 1994),
Proc. Sympos. Appl. Math.  49 (1994), 91–138.


\bibitem{Faber} G. Faber, {\it \"Uber Tschebyscheffsche Polynome}, J. Reine Angew. Math. 150 (1920), 79-106.

\bibitem{Fejer22} L. Fej\'er,
{\it \"Uber die Lage der Nullstellen von Polynomen, die aus Minimumforderungen gewisser Art entspringen},
Math. Ann. 85 (1922), 41-48.

\bibitem{Fekete} M. Fekete, {\it \"Uber die Verteilung der Wurzeln bei gewissen algebraischen Gleichungen mit ganzzahligen Koeffizienten}, Math. Zeitschrift   17 (1923),
228-249.


\bibitem{Gaier} D. Gaier,
{\it Lectures on complex approximation}, Translated  from the German by Renate McLaughlin,
  Birkh\"auser Boston, Inc., Boston MA  1987. 


\bibitem{Klimekbook} M. Klimek, \emph{Pluripotential Theory}, Oxford University Press, Oxford 1991.

\bibitem{Kl95} M. Klimek, {\it Metrics associated with extremal plurisubharmonic functions}, Proc. Amer. Math. Soc.
 123 (1995), 2763-2770.




\bibitem{Kl01} M. Klimek, {\it Iteration of analytic multifunctions}, Nagoya Math. J.
 162 (2001), 19-40.

\bibitem{KK03} M. Klimek, M. Kosek, {\it Composite Julia sets generated by infinite polynomial arrays}, Bull. Sci. Math.  127 (2003), 885-897.

\bibitem{KK18} M. Klimek, M. Kosek, {\it On the metric space of pluriregular sets}, Dolomites Res. Notes Approx. 11 (2018), 
    51-61.


\bibitem{Ko21} M. Kosek, {\it Joukowski and Green, Chebyshev and Julia},  Dolomites Res. Notes Approx. 14 (2021), 
    59-65.


\bibitem{LZ} K. Lech, A. Zdunik,  {\it Total disconnectedness of Julia sets of random quadratic polynomials}, Ergodic Theory Dynam. Systems 42 (2022), 1764-1780.

 \bibitem{LZ24} K. Lech, A. Zdunik,  {\it Equilibrium measures on Julia sets of random quadratic polynomials}, Nonlinearity 37 (2024), no. 12, Paper No. 125021, 31 pp. 


\bibitem{Leja47} F. Leja,
{\it Sur les polyn\^omes de Tchebycheff et la fonction de Green},
Ann. Soc. Polon. Math. 19 (1947), 1-6.






\bibitem{Pasz75} S. Paszkowski,
 {\it Zastosowania numeryczne wielomian\'ow i szereg\'ow Czebyszewa} (Polish) [Numerical applications of Chebyshev polynomials and series],
Podstawowe Algorytmy Numeryczne,
Państwowe Wydawnictwo Naukowe, Warsaw 1975.

\bibitem{PS} G. P\'olya, G. Szeg\"o,
{\it \"Uber den transfiniten Durchmesser (Kapazit\"atskonstante) von ebenen und r\"aumlichen Punktmengen},
J. Reine Angew. Math. 165, 4-49 (1931).


\bibitem{SaTo} E. B. Saff, V. Totik,
{\it Logarithmic potentials with external fields.}
Grundlehren der Mathematischen Wissenschaften  316, Springer-Verlag, Berlin 1997.



\bibitem{Sic1981} J. Siciak, \emph{Extremal plurisubharmonic functions in $\mathbb{C}^N$}, Ann. Polon. Math.  {39} (1981), 175-211.

 \bibitem{P} P. Sprus, \emph{On discs contained in filled Julia sets}, Dolomites Res. Notes Approx. 18 (2025), no. 1, 27-40. 





\bibitem{Walsh} J. L. Walsh, {\it Interpolation and approximation by rational functions in the complex domain.} Fourth edition, Amer. Math. Soc. Colloq. Publ. Vol XX, American Mathematical Society, Providence RI  1965.



\end{thebibliography}
\end{document}